\newtheorem{theorem}{Theorem}
\theoremstyle{plain}
\newtheorem{corollary}{Corollary}
\newtheorem{definition}{Definition}
\newtheorem{proposition}{Proposition}
\newtheorem{remark}{Remark}
\numberwithin{equation}{section}
\begin{document}
\title{Hurwitz complete sets of factorizations in the modular group and the
classification of Lefschetz elliptc fibrations over the disk }
\author{C. Cadavid$^{a}$}
\address{a \ Corresponding Author: Carlos Cadavid, {\small {Universidad
EAFIT, Departamento de Ciencias B\'{a}sicas, Bloque 38, Office 417 }}Carrera
49 No. 7 Sur -50, Medell\'{\i}n, Colombia, Phone: (57)(4)-2619500 Ext 9790,
Fax:(57)(4) 2664284}
\email{ccadavid@eafit.edu.co.}
\author{J. D. V\'{e}lez$^b$}
\address{b \ Juan D. V\'{e}lez, Escuela Matematicas, Universidad Nacional,
Medell\'{\i}n Colombia}
\email{jdvelez14@gmail.com }
\author{Luis F. Moreno$^c$}
\address{c \ Lus Moreno l, EAFIT, Departamento de Ciencias B\'{a}sicas,
Bloque 38, Office 417 Medell\'{\i}n Colombia}
\email{lmorenos@eafit.edu.co.}
\keywords{Hurwitz equivalence, Elliptc fibrations, Modular group, Monodromy,
Kodaira list}
\thanks{MSC 58E05, 35K05}

\begin{abstract}
Given any matrix $B$ in $SL(2,\mathbb{Z})$, we will describe an algorithm
that provides at least one elliptic fibration over the disk, relatively
minimal and Lefschetz, within each topological equivalence class, whose
total monodromy is the conjugacy class of $B$.
\end{abstract}

\maketitle

\section{Introduction}

\label{sec-introduccion}

Locally holomorphic fibrations have received a great deal of attention due
to the close relationship that exists between a 4-dimensional manifold $M$
admitting a symplectic form and the existence of locally holomorphic
fibrations over $M$ (see \cite{Auroux}, \cite{Gompf-2005}). Such fibrations
have been studied extensively by several authors: Over the sphere, by
Moishezon \cite{Moishezon}, and over closed surfaces of arbitrary genus by
Matsumoto \cite{Matsumoto}. Their classification over the disk, for the case
when the total space is two dimensional, is carried out in \cite{Natanzon}, 
\cite{Khovanskii-Zdravkovska}.

In \cite{Cadavid-Velez}, the authors studied distinguished factorizations in 
$SL(2,\mathbb{Z})$ in terms of conjugates of the matrix $U=\left[ 
\begin{array}{cc}
1 & 1 \\ 
0 & 1%
\end{array}%
\right] $, which naturally arise as the monodromy around a singular fiber in
an elliptic fibration. In that article, it is proved that if $M$ is one of
the matrices in the \textit{Kodaira's list}, and if $M=G_{1}\cdots G_{r}$
where each $G_{i}$ is a conjugate of $U$ in $SL(2,\mathbb{Z})$, then after
applying a finite sequence of Hurwitz moves, the product $G_{1}\cdots G_{r}$
can be transformed into another product of the form $H_{1}\cdots
H_{n}G_{n+1}^{\prime }\cdots G_{r}^{\prime }$ where $H_{1}\cdots H_{n}$ is
some fixed shortest factorization of $M$ in terms of conjugates of $U$, and $%
G_{n+1}^{\prime }\cdots G_{r}^{\prime }=Id_{2\times 2}$. We used this result
to obtain necessary and sufficient conditions under which a relatively
minimal elliptic fibration over the disk $D$ without multiple fibers, $\phi
:S\rightarrow D$, admits a weak deformation into another such fibration
having only one singular fiber.

In general, the problem of classification of elliptic fibrations over $D$
which are \textit{relatively minimal and Lefschetz strict} (see definition %
\ref{estricta}), up to topological equivalence, is equivalent to the problem
of studying the set 
\begin{equation*}
\left\{ (g_{1},\ldots ,g_{n}):n\geq 0\text{ and }g_{i}\in SL(2,\mathbb{Z})%
\text{ is a conjugate of }U\right\} \,,
\end{equation*}%
where two $n$-tuples are identified if one can be obtained from the other by
a finite sequence of Hurwitz moves followed by conjugation (see \cite{Auroux}%
, and Definition \ref{def-cambio-Hurwitz}). A satisfactory answer to the
problem would comprise:

\begin{enumerate}
\item A method by which given any $B\in SL(2,\mathbb{Z})$, one could obtain
a subcollection of the set of all equivalent classes of factorization of $B$
in terms of conjugates of $U$, modulo Hurwitz moves, has at least one
representative in this subcollection.

\item An algorithm to decide if two factorizations of $B$ in conjugates of $%
U $ are Hurwitz equivalent.
\end{enumerate}

In this article we construct an algorithm that completely solves the first
of these goals. Similar results were obtained in \cite{Matsumoto} and \cite%
{Moishezon}, for the case where the base is a closed surface.

The second goal seems to be a very difficult problem. It is known that some
cases turned out to be undecidable (see \cite{Teicher-Liberman}).

The article is organized as follows: in Section 2 we introduce the basic
notions concerning elliptic fibrations over the unit disk and their
classification. The central result is theorem \ref{Gompf} which relates the
problem of classifying all of \textit{special} elliptic fibrations over the
disk to the problem of classifying their monodromy representations, up to
conjugation and Hurwitz equivalence, in the modular group. Section 3 is
devoted to the study of the relationship between \textit{special}
factorizations in $PSL(2,\mathbb{Z})$ and their liftings to $SL(2,\mathbb{Z}%
) $. The next section deals with a combinatorial study of Hurwitz
equivalence of special factorization in the modular group. The last section
presents an algorithm for generating a relatively simple $H$-complete set of
special factorizations of any given element in the modular group.

\section{Elliptic fibrations over the disk and Hurwitz equivalence}

\label{sec-fibraciones-elipticas}

\begin{definition}
\label{def-fibracion-eliptica}Let $\Sigma $ be a compact, connected and
oriented smooth two dimensional manifold (with or without boundary). A
topological elliptic fibration over $\Sigma $ is a smooth function $%
f:M\rightarrow \Sigma \,$such that

\begin{enumerate}
\item $M$ is a compact, connected and oriented four dimensional smooth
manifold (with or without boundary).

\item $f$ is surjective.

\item $f(\mathrm{int}(M))=\mathrm{int}(\Sigma )$ and $f(\partial M)=\partial
(\Sigma ).$

\item $f$ has a finite number (possibly zero) of critical values $%
q_{1},\ldots ,q_{n}$ all contained in $\mathrm{int}(\Sigma ).$

\item $f$ is locally holomorphic, that is, for each $p\in \mathrm{int}(M)$
there exists orientation preserving charts from neighborhoods of $p$ and $%
f(p)$, to open sets of $\mathbb{C}^{2}$ and $\mathbb{C}$ (endowed with their
standard orientations), respectively, relative to which $f$ is holomorphic.

\item The preimage of each regular value is a smooth two dimensional
manifold that is closed and connected, and of genus one.
\end{enumerate}
\end{definition}

Two topological elliptic fibrations are regarded equivalent according to the
following definition.

\begin{definition}
Two topological elliptic fibrations $f_{1}:M_{1}\rightarrow \Sigma _{1}$ and 
$f_{2}:M_{2}\rightarrow \Sigma _{2}$ are \emph{topologically equivalent},
written as $f_{1}\sim _{\text{Top}}\ f_{2}$, if there exist orientation
preserving diffeomorphisms $H:M_{1}\rightarrow M_{2}$ and $h:\Sigma
_{1}\rightarrow \Sigma _{2}$ , such that $h\circ f_{1}=f_{2}\circ H$.
\end{definition}

\begin{definition}
\label{estricta}A topological elliptic fibration $f:M\rightarrow \Sigma $
will be called

\begin{enumerate}
\item \emph{Relatively minimal} if none of its fibers contains an embedded
sphere with selfintersection $-1.$

\item \emph{Lefschetz strict} if for each critical point $p$ (necessarily
contained in $\mathrm{int}(M)$) of $f$ there exist charts as in condition 5
above relative to which $f$ takes the form $(z_{1},z_{2})\rightarrow
z_{1}^{2}+z_{2}^{2}$, and $f$ is injective when restricted to the set of
critical points.
\end{enumerate}
\end{definition}

If $f:M\rightarrow \Sigma $ satisfies both conditions, we will say that $f$
is a \emph{special fibration. }

We notice that being \textit{special} is preserved by topological
equivalence.

\emph{In what follows we will only consider special elliptic fibrations over
the closed unit disk, }$D=\{z\in C:|z|\leq 1\}$\emph{, endowed with its
standard orientation. }

\begin{definition}
\label{conjugado}Let $G$ be a group. Any $n$-tuple of elements of $G,$ $%
\alpha =(g_{1},\ldots ,g_{n})$, $n\geq 0$, will be called a \emph{%
factorization}. The only $0$-tuple (the empty tuple) will be denoted by $($ $%
)$. The element $g_{1}\cdots g_{n}$ will be called the \emph{product} of the
factorization, and will be denoted by \textrm{prod}$(\alpha )$. When $\alpha 
$ is empty, we define its product as the identity element of $G$.

Given any $g$ in $G$, we will say that $\alpha $ is \emph{a factorization }%
of $g$ if its product is equal to $g.$

If $A\subset G$, $F(A,G)$ will denote the set formed by all factorizations
in $G$ whose entries are all in $A.$
\end{definition}

We will be interested in the case where $G$ is $SL(2,\mathbb{Z})$ and $%
A=C(U) $ is the set of all conjugates of the element $U=%
\begin{pmatrix}
1 & 1 \\ 
0 & 1%
\end{pmatrix}%
.$ ($U$ represents the monodromy around a critical point in any special
fibration, as explained below.)

\begin{definition}
We will say that a factorization in $SL(2,\mathbb{Z})$ is \emph{special} if
it belongs to $F(C(U),SL(2,\mathbb{Z}))$. This set will be denoted simply by 
$F(U).$
\end{definition}

\begin{definition}
\label{def-cambio-Hurwitz} Let $G$ be a group, and $n\geq 2$. For any
integer $1\leq i\leq n-1$, a Hurwitz right move, at position $i,$ is the
function $H_{i}:G^{n}\rightarrow G^{n}$ defined as 
\begin{equation*}
H_{i}(g_{1},\ldots ,g_{i},g_{i+1},\ldots ,g_{n})=(g_{1},\ldots
,g_{i-1},g_{i+1},g_{i+1}^{-1}g_{i}g_{i+1},g_{i+2},\ldots ,g_{n})\,.
\end{equation*}%
The inverse function is called a Hurwitz left move, at position $i$, which
is given by 
\begin{equation*}
H_{i}^{-1}(g_{1},\ldots ,g_{i},g_{i+1},\ldots ,g_{n})=(g_{1},\ldots
,g_{i-1},g_{i}g_{i+1}g_{i}^{-1},g_{i},g_{i+2},\ldots ,g_{n})\,.
\end{equation*}%
When $H_{i}(g_{1},\ldots ,g_{n})=(g_{1}^{\prime },\ldots ,g_{n}^{\prime })$
(resp. $H_{i}^{-1}(g_{1},\ldots ,g_{n})=(g_{1}^{\prime },\ldots
,g_{n}^{\prime })$ ) we will say that $(g_{1}^{\prime },\ldots
,g_{n}^{\prime })$ is obtained from $(g_{1},\ldots ,g_{n})$ by a Hurwitz
right move (respectively, by a Hurwitz left move) at position $i.$

\label{clave}If $\alpha ^{\prime }=(g_{1}^{\prime },\ldots ,g_{m}^{\prime })$
is obtained from $\alpha =(g_{1},\ldots ,g_{n})$ by a successive
applications of finite Hurwitz moves, we will say that $\alpha $ and $\alpha
^{\prime }$ are $H$-\emph{equivalent}, which we denote by $\alpha \sim
_{H}\alpha ^{\prime }.$ In this case, it follows immediately that $n=m$ and $%
g_{1}^{\prime }\cdots g_{n}^{\prime }=g_{1}\cdots g_{n}$, and therefore
their product is the same. If, moreover, there exists an element $h$ such
that $\alpha \sim _{H}(h^{-1}g_{1}h,\ldots ,h^{-1}g_{n}h),$ we will say that 
$\alpha $ and $\alpha ^{\prime }$ are $C+H$-\emph{equivalent.} This will be
denoted by $\alpha \sim _{C+H}\alpha ^{\prime }.$
\end{definition}

The set of classes $F(U)/\sim _{H},$ and $F(U)/\sim _{C+H}$ will be denoted
by $\varepsilon _{H}$, and $\varepsilon _{C+H}$, respectively. It is clear
that being $C+H$-equivalent is weaker than being $H$-equivalent.

\subsection{Hurwitz complete sets}

As in definition \ref{conjugado}, $C(B)$ denotes the conjugacy class in $%
SL(2,\mathbb{Z})$ of the matrix $B$. Let us notice that if $\alpha
=(G_{1},\ldots ,G_{r})$ in $F(U)$ has product $B$ , then any other element $%
\alpha ^{\prime }$ in the $H$-equivalence class of $\alpha $ also has
product $B$. On the other hand, if $\alpha ^{\prime }$ is just $C+H$%
-equivalent to $\alpha $, then its product belongs to the conjugacy class of 
$B$.

\begin{definition}
\label{H completo} For any matrix $B$ in $SL(2,\mathbb{Z})$, a subset of $%
F(U)$ will be called $H$-complete (respectively, $H+C$-complete) if it
contains at least one representative within each class of equivalence under
the relation $\sim _{H},$ (respectively, under $\sim _{C+H}$).
\end{definition}

Let $f:M\rightarrow D$ be any special fibration over the disk. Let us denote
by $q_{0}$ the point $(1,0)$, and by $C$ the boundary of the disk with its
standard counterclockwise orientation. As usual, 
\begin{equation*}
\rho :\pi _{1}(D-\{q_{1},\ldots ,q_{n}\},q_{0})\rightarrow SL(2,\mathbb{Z})
\end{equation*}%
will stand for the \emph{monodromy representation} where we have identified
the mapping class group of $T^{2},$ a fixed model of the regular fiber, with 
$SL(2,\mathbb{Z}).$ The mapping $\rho $ is an anti-homomorphism determined
by its action on any basis of the rank $n$ free group $\pi
_{1}(D-\{q_{1},\ldots ,q_{n}\},q_{0})$. We may take $\{[\gamma _{1}],\ldots
,[\gamma _{n}]\}$ the standard basis consisting of the classes of clockwise
oriented, pairwise disjoint arcs where each $\gamma _{i}$ surrounds
exclusively the critical value $q_{i}$, $i=1,\ldots ,n.$ We may choose the $%
\gamma _{i}$'s in such a way that (for an appropriate numbering of the $%
q_{i} $'s) the product $[\gamma _{1}]\cdots \lbrack \gamma _{n}]$ equals the
class of $C$. The conjugacy class in $SL(2,\mathbb{Z})$ of $\rho ([C])$ is
called the \emph{total monodromy} of the fibration. It can be readily seen
that this is a well defined notion.

\begin{remark}
If $f:M\rightarrow D$ is any special fibration over the disk, since each
singular fiber has a single ordinary double point (of type $I_{1}$, in
Kodaira%
\'{}%
s classification \cite{Kodaira} ) the monodromy around any of these fibers
is in the conjugacy class of $U=\left( 
\begin{array}{cc}
1 & 1 \\ 
0 & 1%
\end{array}%
\right) $ in $SL(2,\mathbb{Z}).$
\end{remark}

Special fibrations over $D$ can be classified up to conjugation and Hurwitz
moves. More precisely:

\begin{theorem}
\label{Gompf}Let $f_{1}:M_{1}\rightarrow D$ and $f_{2}:M_{2}\rightarrow D$
be two special fibrations. Let us fix monodromy representations $\rho $ and $%
\rho ^{\prime }$, and basis $\{[\gamma _{1}],\ldots ,[\gamma _{n}]\}$, and $%
\{[\gamma _{1}^{\prime }],\ldots ,[\gamma _{n}^{\prime }]\},$ for $f_{1}$
and $f_{2}$, respectively. Let $g_{i}=\rho ([\gamma _{i}])$, and $%
g_{i}^{\prime }=\rho ^{\prime }([\gamma _{i}^{\prime }]).$ Then, $f_{1}$ and 
$f_{2}$ are topologically equivalent if and only if $\alpha =(g_{1},\ldots
,g_{n})$ and $\alpha ^{\prime }=(g_{1}^{\prime },\ldots ,g_{n}^{\prime })$
are equivalent under the equivalence relation $\sim _{C+H}$(Definition \ref%
{clave}).
\end{theorem}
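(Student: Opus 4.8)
The plan is to prove the theorem in two directions, translating geometric data (the fibration and its building blocks near singular fibers) into the algebraic data of factorizations in $SL(2,\mathbb{Z})$, and showing the two kinds of equivalence correspond.

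\medskip
\noindent\textbf{From topological equivalence to $C+H$-equivalence.}
First I would analyze how the factorization $\alpha=(g_1,\dots,g_n)$ attached to $f_1$ depends on the choices made. There are three independent choices: the monodromy representation $\rho$ (which depends on a choice of identification of the reference fiber $f_1^{-1}(q_0)$ with the model $T^2$), the base point and the ordered system of arcs $\{[\gamma_i]\}$ with $[\gamma_1]\cdots[\gamma_n]=[C]$, and nothing else once the diffeomorphism type is fixed. Changing the identification of the reference fiber by an element $h$ of the mapping class group conjugates every $g_i$ by $h$, producing a $C$-equivalent factorization. Changing the ordered system of arcs among all such systems (with the same product $[C]$) realizes, by the classical description of the action of the mapping class group of the $n$-punctured disk / the braid group $B_n$ on such systems, precisely a sequence of Hurwitz moves on $(g_1,\dots,g_n)$; this is the content of the ``half-twist'' generators of $B_n$ acting on geometric bases. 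So any two factorizations coming from the \emph{same} fibration with \emph{different} choices are $C+H$-equivalent. Finally, if $H:M_1\to M_2$ and $h:D\to D$ are orientation-preserving diffeomorphisms with $h\circ f_1=f_2\circ H$, then $H$ carries a geometric basis and a fiber identification for $f_1$ to one for $f_2$, matching the critical values and the reference fiber; hence $\rho'$ on the transported basis equals $\rho$ on the original, so the two factorizations are literally equal after these compatible transports, and composing with the previous paragraph gives $\alpha\sim_{C+H}\alpha'$.

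\medskip
\noindent\textbf{From $C+H$-equivalence to topological equivalence.}
Conversely, given $\alpha\sim_{C+H}\alpha'$, I would build the equivalence geometrically. The key tool is a reconstruction statement: a special fibration over $D$ is determined up to topological equivalence by its factorization $(g_1,\dots,g_n)\in F(U)$, via a ``plumbing''/fiber-sum construction that glues standard Lefschetz models (the local model $z_1^2+z_2^2$ producing the vanishing-cycle monodromy $U$, up to conjugacy) over small disks around $q_1,\dots,q_n$ to a trivial $T^2$-bundle over the complement. One shows (i) every such factorization arises this way, and (ii) the resulting fibration's topological type depends only on the factorization. Then: if $\alpha'$ is obtained from $\alpha$ by a single Hurwitz move, the two reconstructed fibrations are topologically equivalent because the Hurwitz move corresponds to sliding one critical value past another (an ambient isotopy of $D$ permuting the $q_i$ and dragging the arcs), which lifts to a diffeomorphism of total spaces; iterating handles $\sim_H$. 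If instead $\alpha'=(h^{-1}g_1h,\dots,h^{-1}g_nh)$, the reconstructed fibration is the same except the reference fiber is re-identified with $T^2$ through $h$, which changes nothing about the total space, only the labeling — so the fibrations are topologically equivalent (indeed by the identity on $M$). Combining, $C+H$-equivalence of factorizations implies topological equivalence of the fibrations.

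\medskip
\noindent\textbf{Main obstacle.}
I expect the genuinely delicate point to be the reconstruction/uniqueness statement used in the second direction: showing that the cut-and-paste construction over $D$ yields a well-defined map from $F(U)$ onto topological equivalence classes of special fibrations, in particular that two factorizations giving the ``same'' gluing data produce diffeomorphic total spaces respecting the fibration. This requires care because the gluing of the local Lefschetz models to the trivial part involves choices of tubular neighborhoods and identifications of their boundaries with $T^2\times S^1$, and one must check these choices do not affect the topological type — essentially a uniqueness-of-compactification argument for the fibration, together with the fact (Kodaira) that the local model around an $I_1$ fiber is unique up to fiber-preserving diffeomorphism. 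The correspondence between Hurwitz moves and arc-dragging, and between conjugation and re-identification of the reference fiber, is then comparatively formal, but stating the braid-group action on geometric bases precisely (orientations of the arcs, the left-versus-right convention matching the anti-homomorphism property of $\rho$) also needs attention so that $H_i$ as defined in Definition \ref{def-cambio-Hurwitz} is exactly what appears, not its inverse.
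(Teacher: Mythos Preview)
The paper does not actually prove Theorem~\ref{Gompf}; it simply writes ``For a proof see \cite{Gompf-2006}'' and moves on. Your outline is the standard argument one finds in that reference (Gompf--Stipsicz, Chapter~8): one direction via the braid-group action on geometric bases together with change of marking of the reference fiber, the other via the cut-and-paste reconstruction of a Lefschetz fibration from its monodromy factorization. So there is nothing to compare at the level of strategy, and your identification of the reconstruction/uniqueness step as the only substantive point is accurate; the rest is, as you say, formal bookkeeping with orientations and conventions.
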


For a proof see \cite{Gompf-2006}.

Hence, the elements of $\varepsilon _{C+H}$ are in bijective correspondence
with topological equivalency classes of special fibrations over the disk.
Therefore, in order to classify these fibrations, it suffices to describe
the elements of $\varepsilon _{C+H}.$ In this article \emph{we present an
algorithm that for any given matrix }$B$\emph{\ in }$SL(2,Z)$\emph{\
produces an }$H$-\emph{complete set of factorizations of }$B$\emph{. }In
general, this set could be redundant in the sense that it might contain more
that one representative in some equivalence classes. Since $C+H$-equivalence
is weaker than $H$-equivalence, it is clear that this set is also $H+C$%
-complete. \emph{Therefore, for any given }$B,$\emph{\ this algorithm will
provide at least one special elliptic fibration over the disk within each
topological equivalence class, whose total monodromy is the conjugacy class
of }$B$\emph{.}

\section{Special factorizations in $PSL(2,\mathbb{Z})$}

Even though it is well known that $SL(2,\mathbb{Z})$ is generated by the
matrices 
\begin{equation*}
S=%
\begin{pmatrix}
0 & -1 \\ 
1 & 0%
\end{pmatrix}%
\hspace{10pt}\text{and}\hspace{10pt}U=%
\begin{pmatrix}
1 & 1 \\ 
0 & 1%
\end{pmatrix}%
,
\end{equation*}%
it is important for our purposes that a decomposition of any matrix in $SL(2,%
\mathbb{Z})$ as product of powers of $S$ and $U$ (or equivalently, as a
product of powers of $S$ and $R=SU$) can be achieved algorithmically. This
is the content of the next proposition.

\begin{proposition}
Every matrix in $SL(2,\mathbb{Z})$ can be written as a product of powers of $%
S$ and $U$. Moreover, there is an algorithm that given any matrix $B$ in $%
SL(2,\mathbb{Z})$ yields one of such factorizations.
\end{proposition}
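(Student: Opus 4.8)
The plan is to give a constructive, division-with-remainder style argument, essentially the standard Euclidean algorithm adapted to $SL(2,\mathbb{Z})$. First I would record the two elementary effects of multiplying by $U$ and $S$ on a matrix $B=\begin{pmatrix} a & b \\ c & d \end{pmatrix}$: right multiplication by $U^{k}$ adds $k$ times the first column to the second (so it replaces $(b,d)$ by $(b+ka, d+kc)$), while right multiplication by $S$ swaps the two columns up to sign, sending $(a,b,c,d)$ to $(b,-a,d,-c)$. These two operations let us run the Euclidean algorithm on the first row $(a,b)$: if $c=0$ then since $\det B = ad = 1$ we have $a=d=\pm 1$ and $B = \pm U^{b}$ or $\pm U^{-b}$, which (using $S^{2}=-I$) is already a product of powers of $S$ and $U$; otherwise, after possibly applying $S$ to make $|a|\le |b|$ — or rather to control the bottom-left entry — we subtract an appropriate multiple of one column from the other via a power of $U$ to strictly decrease $\max(|a|,|c|)$ or bring $c$ to $0$.

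The cleanest bookkeeping is actually to work on the bottom row. Given $B$ with bottom row $(c,d)$, I would argue by induction on $|c|+|d|$ (or on $\min(|c|,|d|)$ with a tie-breaking case analysis). If $c=0$, then $d = \pm 1$ and $B = \pm U^{b}$ as above, done. If $c\neq 0$, apply $S$ on the right if necessary so that we may write the bottom row as $(c',d')$ with $0 \le |d'| < |c'|$ after a further right multiplication by a suitable $U^{k}$ (the Euclidean step: choose $k$ so that $d + kc$, or the analogous entry, has absolute value less than $|c|$). Each such step strictly decreases the relevant positive integer, so the process terminates, producing $B\cdot W = \pm I$ for some word $W$ in $S$ and $U$; then $B = \pm W^{-1} = \pm(\text{word in }S, U)$, and $-I = S^{2}$ absorbs the sign. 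Since each step only requires integer division with remainder and a comparison, the whole procedure is an explicit algorithm, which proves the ``moreover'' clause.

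I do not expect a genuine obstacle here; the only thing requiring care is the organization of the case analysis so that the chosen monovariant (some nonnegative integer built from the entries) strictly decreases at every step and the base case is correctly identified. A minor subtlety is that multiplication by $S$ alone does not decrease the monovariant — it only reorders columns — so one must make sure that each $S$ is immediately followed by a genuinely reducing $U^{k}$, or else bundle the ``swap then reduce'' into a single inductive step; otherwise a naive argument could loop. I would phrase the induction on the pair $(|c|,|d|)$ ordered appropriately, treating the subcase $|c| = |d|$ (forcing $|c|=|d|=1$ by coprimality, since $\gcd(c,d)$ divides $\det B = 1$) separately, where one more $U^{\pm 1}$ clears an entry. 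With that handled, the statement follows, and the explicit sequence of $S$'s and $U^{k}$'s extracted from the run of the algorithm is exactly the promised factorization.
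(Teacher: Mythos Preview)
Your argument is correct and is essentially the same Euclidean-algorithm approach as the paper's: the paper premultiplies by powers of $U$ and by $S$ to perform row operations reducing the first column $(a,c)$, whereas you postmultiply to perform column operations reducing the bottom row $(c,d)$, but this is the same idea up to working on the transpose. The only cosmetic difference is the side of multiplication and which pair of entries carries the $\gcd$ computation; the termination, the base case $B'=\pm I_{2}U^{m}$, and the use of $S^{2}=-I_{2}$ to absorb the sign are identical.
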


\begin{proof}
For any matrix $A$, $U^{n}A$ is the matrix obtained from $A$ by performing
the row operation corresponding to adding $n$ times the second row to the
first, while $SA$ is the matrix obtained from $A$ by performing the row
operation corresponding to interchanging the first and second row, and
multiplying the first row by $-1.$

For any matrix $B=%
\begin{pmatrix}
a & b \\ 
c & d%
\end{pmatrix}%
$, since $\det (B)=1,$ the entries $a$ and $c$ must be relatively prime. If $%
\left\vert c\right\vert <\left\vert a\right\vert ,$ by the euclidean
algorithm, if $a=cn+r$, then by premultiplying by $U^{-n}$ we obtain a
matrix of the form $U^{-n}B=%
\begin{pmatrix}
r & b^{\prime } \\ 
c & d%
\end{pmatrix}%
$

with $b^{\prime }=$ $b-nd.$ In case where $\left\vert a\right\vert
<\left\vert c\right\vert $, we may first multiply by $S$ to interchange the
rows. Thus, in any case, premultiplying by $U^{-n},$ or by $U^{-n}S$, has
the effect of putting $B$ in the form $%
\begin{pmatrix}
r & b^{\prime } \\ 
c & d%
\end{pmatrix}%
$, where $\mathrm{lcd}(a,c)=\mathrm{lcd}(c,r)$ ($\mathrm{lcd}$ denotes the
lest common divisor). By successively premultiplying by $S,$ and suitable
powers of $U,$ we may transform $B$ into a matrix of the form $B^{\prime }=%
\begin{pmatrix}
\pm 1 & m \\ 
0 & k%
\end{pmatrix}%
.$ That is, $B^{\prime }=PB$, where $P$ is a product of $S$ and powers of $U$%
. Since $B^{\prime }$is in $SL(2,\mathbb{Z})$, $k$ must be equal to $\pm 1.$
Therefore, $B^{\prime }=\pm I_{2}U^{\pm m}.$ Since $S^{2}=-I_{2}$, then $%
B=P^{-1}(\pm I_{2})U^{\pm m}.$
\end{proof}

The modular group, $SL(2,\mathbb{Z})/\{\pm I_{2}\},$ will be denoted by $%
PSL(2,\mathbb{Z)}$. For the sake of brevity, we will denote this group
simply by $\mathcal{M}.$The classes of $S,U$ and $R$ will be denoted by $%
\omega ,u$ and $b$, respectively. Note that $b=\omega u.$ It is a well known
fact 
\begin{equation*}
\mathcal{M}\mathbb{=}\left\langle \omega ,b\left\vert \ \omega
^{2}=b^{3}=1\right. \right\rangle .
\end{equation*}

The following corollary is an immediate consequence of the previous
proposition.

\begin{corollary}
There is an algorithm that expresses any element in $\mathcal{M}$ as a
product of positive powers of $\omega $ and $b.$
\end{corollary}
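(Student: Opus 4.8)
The plan is to deduce the corollary directly from the preceding proposition by pushing the algorithmic factorization through the quotient map $SL(2,\mathbb{Z})\to\mathcal{M}$. Given $g\in\mathcal{M}$, first lift it to a matrix $B\in SL(2,\mathbb{Z})$ (for instance by choosing either of its two preimages); then apply the algorithm of the proposition to obtain $B=P^{-1}(\pm I_{2})U^{\pm m}$, where $P$ is an explicit word in $S$ and positive powers of $U$. Projecting to $\mathcal{M}$, the factor $\pm I_{2}$ dies, so $g$ becomes an explicit word in $\omega=[S]$ and $u=[U]$, with the powers of $U$ possibly negative.

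The remaining work is purely cosmetic: converting this word in $\omega,u$ into a word in positive powers of $\omega$ and $b$. First I would replace every occurrence of $u$ by $\omega b$ (since $b=\omega u$ gives $u=\omega^{-1}b=\omega b$, using $\omega^{2}=1$), and every $u^{-1}$ by $b^{-1}\omega$. This leaves only possible negative exponents on $\omega$ and $b$; but $\omega^{-1}=\omega$ (from $\omega^{2}=1$) and $b^{-1}=b^{2}$ (from $b^{3}=1$), so each negative power is rewritten as a positive one in finitely many steps. Every one of these substitutions is an effective string rewrite, so composing them with the algorithm of the proposition yields the desired algorithm.

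I do not anticipate a genuine obstacle here; the statement is essentially a bookkeeping consequence of the proposition together with the two relators of $\mathcal{M}$. The only point requiring a word of care is the very first step—that one may \emph{algorithmically} choose a lift $B$ of a given $g\in\mathcal{M}$—but this is immediate once one fixes a concrete description of how elements of $\mathcal{M}$ are presented (e.g. as cosets $\{B,-B\}$, from which one simply picks $B$). With that settled, the chain ``lift, apply the proposition, kill $\pm I_{2}$, substitute $u\mapsto\omega b$, clear negative exponents via $\omega^{2}=b^{3}=1$'' constitutes the full proof.
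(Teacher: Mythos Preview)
Your proposal is correct and is exactly the approach implicit in the paper, which simply states that the corollary is an immediate consequence of the preceding proposition. You have merely spelled out the obvious details---lift, apply the algorithm, project (killing $\pm I_{2}$), substitute $u=\omega b$, and use $\omega^{2}=b^{3}=1$ to make all exponents positive---none of which the paper bothers to write down.
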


Let $\pi :SL(2,\mathbb{Z})\rightarrow \mathcal{M}$ denote the canonical
homomorphism to the quotient.

\begin{definition}
A factorization $\alpha =(g_{1},\ldots ,g_{n})$ in $\mathcal{M}$ will be
called \emph{special} if each $g_{i}$ is a conjugate of $u.$

A special factorization $\alpha =(A_{1},\ldots ,A_{n})$ in $SL(2,\mathbb{Z})$
will be called a \emph{lift} \emph{of }$\alpha $, if $\pi (A_{i})=g_{i}$ for
each $i.$
\end{definition}

We observe that each special factorization $\alpha =(g_{1},\ldots ,g_{n})$
in $\mathcal{M}$ has exactly one lift. Indeed, if $g_{i}=a_{i}ua_{i}^{-1}$,
then its preimages are $\pm A_{i}UA_{i}^{-1},$ where $A_{i}$ is any preimage
of $a_{i}.$ But only $A_{i}UA_{i}^{-1}$ is a conjugate of $U$, since the
trace$(-A_{i}UA_{i}^{-1})=-2$, and every conjugate of $U$ has trace $2.$ The
lift $\alpha $ will be denoted by $\mathrm{lift}(\alpha )$.

Now, in $\mathcal{M},$ if $\alpha ^{\prime }$ is obtained from $\alpha $ by
performing a Hurwitz move, then $\mathrm{lift}(\alpha ^{\prime })$ can be
obtained from $\mathrm{lift}(\alpha )$ by the corresponding move.
Reciprocally, Hurwitz moves in $SL(2,\mathbb{Z})$ can be transformed into
Hurwitz moves in $\mathcal{M}$ via $\pi $. Therefore, $\alpha \sim
_{H}\alpha ^{\prime }$ if and only if $\mathrm{lift}(\alpha )\sim _{H}%
\mathrm{lift}(\alpha ^{\prime }).$ From this, it follows that $H$-complete
sets for a matrix in $SL(2,\mathbb{Z})$ can be obtained from $H$-complete
sets for $\pi (B)$ in $\mathcal{M}.$ More precisely:

\begin{proposition}
Let $A$ be an element of $SL(2,\mathbb{Z})$. If $\mathcal{S}$ is an $H$%
-complete set for $\pi (A)$ then the collection 
\begin{equation*}
\mathcal{R}=\{\text{\textrm{lift}}(\alpha ):\alpha \in \mathcal{S}\text{ and 
\textrm{prod(lift}}(\alpha ))=A\}
\end{equation*}%
is an $H$-complete set for $A.$
\end{proposition}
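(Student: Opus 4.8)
The plan is to show two things: first, that every $H$-equivalence class of special factorizations of $A$ in $SL(2,\mathbb{Z})$ has a representative of the form $\mathrm{lift}(\alpha)$ for some special factorization $\alpha$ in $\mathcal{M}$; and second, that among these, the ones arising from $\mathcal{S}$ already meet every class. The key observation, already recorded in the excerpt, is that $\pi$ intertwines Hurwitz moves in $SL(2,\mathbb{Z})$ with Hurwitz moves in $\mathcal{M}$, and that each special factorization in $\mathcal{M}$ has a \emph{unique} special lift to $SL(2,\mathbb{Z})$; consequently $\alpha \sim_H \alpha'$ in $\mathcal{M}$ if and only if $\mathrm{lift}(\alpha) \sim_H \mathrm{lift}(\alpha')$ in $SL(2,\mathbb{Z})$.

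First I would take an arbitrary $\beta = (A_1,\ldots,A_n) \in F(U)$ with $\mathrm{prod}(\beta) = A$. Applying $\pi$ entrywise yields a special factorization $\pi(\beta) = (\pi(A_1),\ldots,\pi(A_n))$ in $\mathcal{M}$ whose product is $\pi(A)$. Crucially, because each $A_i$ is a genuine conjugate of $U$ (not of $-U$), we have $\mathrm{lift}(\pi(\beta)) = \beta$: the unique special lift of $\pi(\beta)$ recovers $\beta$ exactly. Since $\mathcal{S}$ is $H$-complete for $\pi(A)$, there is some $\alpha \in \mathcal{S}$ with $\alpha \sim_H \pi(\beta)$. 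Then $\mathrm{prod}(\alpha) = \mathrm{prod}(\pi(\beta)) = \pi(A)$, and by the lifting correspondence $\mathrm{lift}(\alpha) \sim_H \mathrm{lift}(\pi(\beta)) = \beta$. In particular $\mathrm{prod}(\mathrm{lift}(\alpha)) = \mathrm{prod}(\beta) = A$, so $\alpha$ satisfies the defining condition of $\mathcal{R}$, hence $\mathrm{lift}(\alpha) \in \mathcal{R}$. Thus $\mathcal{R}$ contains a representative $\mathrm{lift}(\alpha)$ of the $H$-class of $\beta$, which is what $H$-completeness for $A$ requires.

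The one subtlety worth being careful about — the "main obstacle," such as it is — is the possibility that $\alpha \in \mathcal{S}$ has $\mathrm{prod}(\alpha) = \pi(A)$ but its unique special lift has product $-A$ rather than $A$; such an $\alpha$ must be excluded, which is exactly why $\mathcal{R}$ is defined with the side condition $\mathrm{prod}(\mathrm{lift}(\alpha)) = A$. The argument above sidesteps this because the particular $\alpha$ produced (being $H$-equivalent to $\pi(\beta)$, whose lift is $\beta$ with product $A$) automatically has a lift with product $A$ — Hurwitz moves preserve the product, so $\mathrm{prod}(\mathrm{lift}(\alpha)) = \mathrm{prod}(\mathrm{lift}(\pi(\beta))) = \mathrm{prod}(\beta) = A$. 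One should also note that $\mathcal{R} \subseteq F(U)$: every $\mathrm{lift}(\alpha)$ is by construction a special factorization in $SL(2,\mathbb{Z})$. Assembling these observations gives the claim; no further computation is needed.
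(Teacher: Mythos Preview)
Your argument is correct and follows the same approach as the paper's proof. The paper's proof is terse, invoking only the fact that Hurwitz moves preserve products, because the other ingredients you spell out (uniqueness of lifts, and that $\alpha\sim_H\alpha'$ in $\mathcal{M}$ iff $\mathrm{lift}(\alpha)\sim_H\mathrm{lift}(\alpha')$ in $SL(2,\mathbb{Z})$) are established in the paragraph immediately preceding the proposition; your write-up simply makes the full chain of implications explicit.
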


\begin{proof}
The proposition follows from the obvious observation that if $\alpha \sim
_{H}\alpha ^{\prime }$ in $SL(2,\mathbb{Z})$ then \textrm{prod(}$\alpha )=$%
\textrm{prod(}$\alpha ^{\prime })$.
\end{proof}

\section{$H$-complete sets in $\mathcal{M}$}

In this section, $\mathcal{M}$ will be identified with the free product 
\begin{equation*}
\mathbb{Z}_{2}\ast \mathbb{Z}_{3}=\left\langle \omega ,b\left\vert \ \omega
^{2}=b^{3}=1\right. \right\rangle .
\end{equation*}%
There is a unique automorphism $\phi $ of $\mathcal{M}$ that sends $\omega $
into itself and $b$ into $b^{2}.$Let us denote by $c_{b}:\mathcal{M}%
\rightarrow \mathcal{M}$ conjugation by $b$, i.e., $c_{b}(z)=bzb^{-1},$ and
by $h$ the composition $h=c_{b}\circ \phi .$ The problem of finding $H$%
-complete sets in $\mathcal{M}$ in terms of conjugates of $u=\omega b$ is
equivalent, via $h$, to the problem of finding $H$-complete sets of elements
in terms of conjugates of $h(u)=b\omega b.$

It is important to have a symbol for the empty word: We will denote it by $%
1. $

It is a standard fact that each element $a$ in $\mathcal{M}$ can be written
uniquely as a product $a=t_{k}\cdots t_{1}$, where each $t_{i}$ is either $%
\omega ,b,$ or $b^{2}$ and no consecutive pair $t_{i}t_{i+1}$ is formed
either by two powers of $b$ or two copies of $\omega $. .We call the product 
$t_{k}\cdots t_{1}$ the \emph{reduced expression}\textit{\ of }$a,$ and we
call $k$ the \emph{length} of $a,$ denoted by $l(a).$ Let $z=t_{1}^{\prime
}\cdots t_{l}^{\prime }$ be the reduced expression of $z.$ If exactly the
first $m-1$ terms of $z$ cancel with those of $a,$ i.e. $t_{i}^{\prime
}=t_{i}^{-1}$, for $1\leq i\leq m-1,$ and if $m\leq \min (k,l),$ then $%
az=t_{k}\cdots t_{m}t_{m}^{\prime }\cdots t_{l}^{\prime }$ and $%
t_{m}t_{m}^{\prime }$ has to be equal to a non trivial power of $b.$ This is
because if $t_{m}$ were not a power of $b$ then it would have to be $\omega $
and therefore $t_{m-1}$ would be a first or second power of $b,$ and so
would be $t_{m-1}^{\prime }.$ Hence, $t_{m}^{\prime }$ would also have to be 
$\omega $ but in this case there would be $m$ instead of $m-1$ cancellations
at the juncture of $a$ and $z$. Thus, $t_{m}$ and $t_{m}^{\prime }$ are both
powers of $b$ and since there are exactly $m-1$ cancellations their product
must be non trivial. Thus, the reduced expression for $az$ is of the form%
\begin{equation}
az=t_{k}\cdots t_{m+1}b^{r}t_{m+1}^{\prime }\cdots t_{l}^{\prime }\text{, }%
r=1\text{ or }2\text{, \ if }m\leq \min (k,l).  \label{F0}
\end{equation}

Let $s_{1}$ denote the element $b\omega b$. The shortest conjugates of $%
s_{1} $ in $\mathcal{M}$ are precisely $s_{0}=b^{2}(b\omega b)b=\omega b^{2}$
and $s_{2}=b(b\omega b)b^{2}=b^{2}\omega $. The element $s_{1}$ is trivially
a conjugate of itself of length $3$. It can be easily seen that if $g$ is a
conjugate of greater length, its reduced expression is of the form $%
Q^{-1}s_{1}Q,$ where $Q$ is a reduced word that begins with $\omega $ (see 
\cite{Friedman-Morgan}), and $l(g)=2l(Q)+3.$ We will call a conjugate of $%
s_{1}$ (\textit{conjugate} will always mean conjugate of $s_{1}$ in $%
\mathcal{M}$) \emph{short} if $g\in \{s_{0},s_{1},s_{2}\},$ otherwise it
will be called\emph{\ long}.

The following notion is the key ingredient for understanding the reduced
expression of a product of conjugates of $s_{1}.$

\begin{definition}
We will say that two conjugates $g$ and $h$ of $s_{1}$ \emph{join well }if $%
l(gh)\geq \max (l(g),l(h)).$ Otherwise, we say they \emph{join badly}.
\end{definition}

The notion of being a \textit{special} factorization will be used in the
following sense:

\begin{definition}
A factorization $\alpha =(g_{1},\ldots ,g_{n})$ in $\mathcal{M}$ is called 
\emph{special} if each $g_{i}$ is a conjugate of $s_{1}$. We say $\alpha $
is \emph{well jointed} if each pair of elements $g_{i},g_{i+1}$ join well.
Otherwise, we say that $\alpha $ is \emph{badly jointed.}

The empty factorization will be regarded as being special, and well jointed.
Special factorizations with just one element will also be regarded as well
jointed.
\end{definition}

\begin{remark}
\label{notilla} We notice that the following identities hold:

$s_{2}s_{2}=b^{2}\omega b^{2}\omega $, $s_{1}s_{1}=b\omega b^{2}\omega b$,

$s_{0}s_{0}=\omega b^{2}\omega b^{2}$, $s_{2}s_{1}=b^{2}\omega b\omega b$,

and, $s_{1}s_{0}=b\omega b\omega b^{2}$ and $s_{0}s_{2}=\omega b\omega $.

Hence, the corresponding factorizations in each case are well jointed. On
the other hand, since $s_{0}s_{1}=s_{1}s_{2}=s_{2}s_{0}=b$, the
corresponding factorizations are badly joined. \label{juntan-mal}
\end{remark}

The following propositions will be useful for the proof of one of the main
results used for the construction of $H$-complete sets.

\begin{proposition}
\label{prop-disminuir-longitud-pares} Let $g_{1},g_{2}$ be conjugates of $%
s_{1}$ such that $g_{1},g_{2}$ do not joint well. Then:

\begin{enumerate}
\item $g_{1},g_{2}$ are short conjugates or

\item $(g_{1},g_{2})$ may be transformed by a Hurwitz move into a new pair $%
(h_{1},h_{2})$ such that $\max \{0,l(h_{1})-3\}+\max \{0,l(h_{2})-3\}<\max
\{0,l(g_{1})-3\}+\max \{0,l(g_{2})-3\}$.
\end{enumerate}
\end{proposition}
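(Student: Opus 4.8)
The plan is to analyze the reduced expressions of $g_1$ and $g_2$ directly and use the structure of cancellation in the free product $\mathbb{Z}_2 * \mathbb{Z}_3$. Write each long conjugate as $g_i = Q_i^{-1} s_1 Q_i$ where $Q_i$ is a reduced word beginning with $\omega$, so that $l(g_i) = 2 l(Q_i) + 3$; a short conjugate has the degenerate form with $Q_i = 1$. Suppose, contrary to case (1), that at least one of $g_1, g_2$ is long. The hypothesis that $g_1, g_2$ join badly means $l(g_1 g_2) < \max(l(g_1), l(g_2))$, so there must be substantial cancellation in the product $g_1 g_2 = Q_1^{-1} s_1 Q_1 Q_2^{-1} s_1 Q_2$ — in particular at the juncture $Q_1 Q_2^{-1}$, and enough of it to eat into one of the $s_1$ cores. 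First I would quantify exactly how much cancellation is forced: since each $s_1 = b\omega b$, after the letters of $Q_1$ (resp. $Q_2$) are consumed the next letter exposed is a power of $b$, and by the analysis leading to equation (\ref{F0}) in the excerpt the collision of these two $b$-powers either annihilates (producing further cancellation into the cores) or leaves a single $b^r$. Tracking this carefully shows that bad joining forces $Q_2^{-1}$ to begin by cancelling all of $Q_1$ (or vice versa), i.e. one of $Q_1, Q_2$ is an initial segment of the other, together with an extra collision at the $b$'s of the cores.

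Next I would exhibit the Hurwitz move that does the reducing. Recall $H_1(g_1, g_2) = (g_2, g_2^{-1} g_1 g_2)$ and $H_1^{-1}(g_1, g_2) = (g_1 g_2 g_1^{-1}, g_1)$. The idea is: whichever of $g_1, g_2$ has the longer conjugating word, say $Q_1$, we conjugate $g_1$ by $g_2$ (via $H_1$), replacing $g_1$ by $g_2^{-1} g_1 g_2 = Q_2^{-1} s_1 Q_2 Q_1^{-1} s_1 Q_1 Q_2^{-1} s_1^{-1} Q_2$. Using the cancellation established above — that $Q_2$ is (essentially) an initial segment of $Q_1$ — the outer copies of $s_1^{\pm 1}$ and the $Q_2$'s collapse against the inner word, and the new first entry $h_1$ has a conjugating word strictly shorter than $Q_1$ (it is, morally, $Q_1$ with $Q_2$ and one $b$-letter stripped off), while $h_2 = g_2$ is unchanged. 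Hence $\max\{0, l(h_1) - 3\}$ drops while $\max\{0, l(h_2)-3\}$ stays the same, which is the desired strict inequality. If instead $Q_2$ is the longer word one uses $H_1^{-1}$ symmetrically, replacing $g_2$ by $g_1 g_2 g_1^{-1}$. A handful of small cases (one of the $g_i$ short, the other long; the various residues $r = 1, 2$; the three short conjugates $s_0, s_1, s_2$) should be checked against Remark \ref{notilla}, which already records that the only badly-jointed pairs of short conjugates are $(s_0, s_1)$, $(s_1, s_2)$, $(s_2, s_0)$, each with product $b$ — these fall squarely under case (1).

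The main obstacle I anticipate is the bookkeeping in the cancellation argument: one must be sure that "bad joining" really does force one conjugating word to be an initial segment of the other, rather than some partial overlap that only shortens the product a little without touching the cores — and then that the chosen Hurwitz move genuinely produces a \emph{reduced} expression of strictly smaller core-length, with no hidden re-cancellation that could spoil the length count. The key technical lever is the rigidity of reduced words in a free product: once we know the exposed letter after $Q_i$ is always a power of $b$, the collision at the juncture is completely determined, and equation (\ref{F0}) pins down the outcome. I would organize the proof so that this juncture analysis is done once, cleanly, and then both the $H_1$ and $H_1^{-1}$ cases are read off from it by symmetry, relegating the short-conjugate possibilities to a direct appeal to Remark \ref{notilla}.
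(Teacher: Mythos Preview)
The paper does not actually give a proof of this proposition: its entire argument is the sentence ``It follows from the proof of Proposition~4.15, \cite{Friedman-Morgan}.'' Your proposal is therefore more detailed than the paper's own treatment, and it is a faithful outline of the Friedman--Morgan argument --- a direct analysis of cancellation in the free product $\mathbb{Z}_2 * \mathbb{Z}_3$ at the juncture $Q_1 Q_2^{-1}$, followed by the observation that the appropriate Hurwitz move (right or left, according to which conjugating word is longer) strips off the shorter conjugating word together with a piece of the core, strictly decreasing the displayed complexity.

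One caution on your sketch: the slogan ``bad joining forces one of $Q_1, Q_2$ to be an initial segment of the other'' is morally right but slightly too clean. What actually happens in Friedman--Morgan's case analysis is that complete cancellation of the shorter $Q_i$ may be followed by a merge of $b$-powers (via equation~(\ref{F0})) rather than a literal prefix relation, and there are boundary cases where one $g_i$ is short ($Q_i = 1$) that do not reduce to Remark~\ref{notilla} alone --- e.g.\ a short $g_2$ against a long $g_1$ still requires computing $g_2^{-1} g_1 g_2$ explicitly. These are exactly the ``handful of small cases'' and the ``hidden re-cancellation'' you flag; they are not hard, but they are where the work lies, and you should expect the full write-up to be a somewhat tedious enumeration rather than a single clean stroke.
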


\begin{proof}
It follows from the proof of Proposition 4.15, \cite{Friedman-Morgan}.
\end{proof}

\begin{proposition}
\label{prop-transformacion} Every spacial factorization $\alpha
=(g_{1},\ldots ,g_{n})$ can be trasformed by Hurwitz moves into a
factorization $\beta =(h_{1},\ldots ,h_{n})$(necessarily special, and with
the same number of factors), satisfying:

\begin{itemize}
\item[i)] Each $h_{i}$ is short, or

\item[ii)] $\beta $ is well jointed and at least one of the $h_{i}^{\prime
}s $ is long.
\end{itemize}
\end{proposition}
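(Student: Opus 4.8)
The plan is to induct on a complexity measure that combines the total ``excess length'' of the factorization with the number of badly jointed consecutive pairs. For a special factorization $\alpha=(g_1,\ldots,g_n)$, set $E(\alpha)=\sum_{i=1}^{n}\max\{0,l(g_i)-3\}$ and let $J(\alpha)$ be the number of indices $i$ with $1\le i\le n-1$ such that $g_i,g_{i+1}$ join badly. I would order pairs lexicographically by $(E(\alpha),J(\alpha))$ and show that if $\alpha$ is not already in one of the two prescribed forms, then some Hurwitz move strictly decreases this pair. The base case $n\le 1$ is immediate (such factorizations are well jointed by convention, and either the single entry is short, giving (i), or it is long, giving (ii)).

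For the inductive step, suppose $\alpha$ is neither all-short nor (well jointed with a long entry). Two cases arise. \emph{Case A: $\alpha$ has a badly jointed pair $g_i,g_{i+1}$.} By Proposition \ref{prop-disminuir-longitud-pares}, either both $g_i$ and $g_{i+1}$ are short, or a single Hurwitz move at position $i$ replaces $(g_i,g_{i+1})$ by $(h_i,h_{i+1})$ with $\max\{0,l(h_i)-3\}+\max\{0,l(h_{i+1})-3\}$ strictly smaller than the corresponding sum for $(g_i,g_{i+1})$. In the latter subcase the move strictly decreases $E$, so we are done by induction. In the former subcase, $g_i,g_{i+1}$ are short and join badly, so by Remark \ref{notilla} we have $g_ig_{i+1}=b$ (the three badly jointed short pairs are exactly $s_0s_1=s_1s_2=s_2s_0=b$); I would then use a Hurwitz move to push one of these two entries past a neighbor. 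The key computation to carry out here is that conjugating a short conjugate by $b^{\pm 1}$ again yields a short conjugate (indeed cycles $s_0\to s_1\to s_2\to s_0$ up to the relevant identity), while conjugating the \emph{neighbor} $g_{i-1}$ or $g_{i+2}$ by $b^{\pm 1}$ does not increase its excess length by more than can be compensated by the drop in $J$; one shows the move can be chosen so that $E$ is unchanged and $J$ strictly drops, or $E$ strictly drops. \emph{Case B: $\alpha$ is well jointed but has no long entry}, i.e. every $g_i$ is short --- but then $\alpha$ satisfies (i), contradicting our assumption, so this case is vacuous. Hence every non-terminal $\alpha$ admits a complexity-reducing move.

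The one point requiring care --- and the step I expect to be the main obstacle --- is the bookkeeping in the ``two short conjugates joining badly'' subcase of Case A. When $g_ig_{i+1}=b$, a Hurwitz right move at position $i$ sends $(\ldots,g_{i-1},g_i,g_{i+1},g_{i+2},\ldots)$ to $(\ldots,g_{i-1},g_{i+1},g_{i+1}^{-1}g_ig_{i+1},g_{i+2},\ldots)$; the new entry $g_{i+1}^{-1}g_ig_{i+1}=g_{i+1}^{-1}bg_{i+1}^{-1}$ must be checked to be short (using the explicit list $s_0,s_1,s_2$ and the relation $b\omega b=s_1$), and then one must verify the joining behavior of this rearranged triple against $g_{i-1}$ and $g_{i+2}$ to confirm $J$ does not go \emph{up}. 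An honest case analysis over the $3\times 3$ possibilities for $(g_i,g_{i+1})\in\{s_0,s_1,s_2\}^2$ with product $b$, together with the values of $g_{i-1}g_i$ and $g_{i+1}g_{i+2}$ read off from Remark \ref{notilla}, settles this; alternatively one can cite Proposition 4.15 of \cite{Friedman-Morgan}, from which Proposition \ref{prop-disminuir-longitud-pares} was already extracted, for exactly this normalization of short triples. Finally, since each Hurwitz move preserves speciality and the number of factors, the terminal factorization $\beta$ is special with $n$ entries, and by construction it falls into case (i) or case (ii), completing the proof. \qed
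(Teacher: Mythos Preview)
The paper itself does not prove this proposition; it simply refers to \cite{Friedman-Morgan}. Your proposal attempts an independent argument, and the overall strategy---reduce a complexity built from the excess lengths and the number of bad joints---is the right idea. However, the specific claim that \emph{a single Hurwitz move always strictly decreases the pair $(E(\alpha),J(\alpha))$ in lexicographic order} is false, and this is precisely the ``two short conjugates joining badly'' subcase you flagged as delicate.

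Here is a concrete counterexample. Take $g=\omega b\omega b\omega$, the long conjugate $P^{-1}s_{1}P$ with $P=\omega$, and set $\alpha=(s_{0},s_{1},g)$. One checks $s_{1}g=b\omega b\omega b\omega b\omega$ (length $8\ge 5$), so $(s_{1},g)$ join well; hence $J(\alpha)=1$ and $E(\alpha)=l(g)-3=2$. Now examine all four Hurwitz moves:
\begin{itemize}
\item Left move at position $1$ gives $(s_{2},s_{0},g)$; since $s_{0}g=\omega b^{2}\omega b\omega b\omega$ has length $7\ge 5$, the pair $(s_{0},g)$ joins well and $(E,J)=(2,1)$, unchanged.
\item Right move at position $1$ gives $(s_{1},s_{2},g)$; since $s_{2}g=b^{2}\omega\cdot\omega b\omega b\omega=\omega b\omega$ has length $3<5$, the pair $(s_{2},g)$ joins \emph{badly}, so $(E,J)=(2,2)$, strictly larger.
\item Either move at position $2$ replaces $g$ or $s_{1}$ by a longer conjugate (lengths $11$ or $13$), so $E$ strictly increases.
\end{itemize}
Thus no single move from $\alpha$ decreases $(E,J)$. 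To reduce $E$ you are \emph{forced} to pass through $(s_{1},s_{2},g)$, temporarily raising $J$ to $2$, so that Proposition~\ref{prop-disminuir-longitud-pares} can then be applied to the newly created bad joint $(s_{2},g)$ involving the long conjugate.

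The fix is to drop $J$ from the complexity and argue instead as follows: choose $\beta$ in the Hurwitz class of $\alpha$ with $E(\beta)$ minimal. If every entry is short you are in case~(i). Otherwise some entry is long; if $\beta$ is well jointed you are in case~(ii). If not, every bad joint in $\beta$ must be between two short conjugates (else Proposition~\ref{prop-disminuir-longitud-pares} contradicts minimality of $E$). Now use the cyclic $H$-equivalence of the three bad short pairs to \emph{propagate} a bad joint along a maximal run of short entries until it is adjacent to a long neighbour---this takes a sequence of moves, not one---and then Proposition~\ref{prop-disminuir-longitud-pares} applies to that new bad pair, contradicting minimality of $E$. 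This propagation step is exactly what your ``honest case analysis'' would have to establish, but it does not fit into the single-move $(E,J)$-decrease framework you set up.
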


\begin{proof}
(See \cite{Friedman-Morgan})
\end{proof}

\begin{proposition}
\label{prelema-prop-corto-Hurwitz-equivalente} Every factorization $%
(g_{1},g_{2},g_{3})$ in which each $g_{i}$ is short, and where $g_{1},g_{2}$
join badly, is $H$-equivalent to a factorization $(g_{1}^{\prime
},g_{2}^{\prime },g_{3}^{\prime })$, where each $g_{i}^{\prime }$ is short,
and $g_{2}^{\prime },g_{3}^{\prime }$ join badly.
\end{proposition}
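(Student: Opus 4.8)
The plan is to work entirely inside the finite set of short conjugates $\{s_0,s_1,s_2\}$ and exploit the multiplication table recorded in Remark \ref{notilla}. First I would recall the three badly-joined ordered pairs: $(s_0,s_1)$, $(s_1,s_2)$, $(s_2,s_0)$, each having product $b$. Since $g_1,g_2$ are short and join badly, $(g_1,g_2)$ is one of these three pairs, and $g_3\in\{s_0,s_1,s_2\}$ is arbitrary. This reduces the claim to a finite case check: nine ordered triples $(g_1,g_2,g_3)$ to examine, and for each we must produce a Hurwitz-equivalent triple $(g_1',g_2',g_3')$ with all entries short and with $(g_2',g_3')$ badly joined.

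The mechanism I would use is a single Hurwitz right move at position $1$, which sends $(g_1,g_2,g_3)$ to $(g_2,\,g_2^{-1}g_1g_2,\,g_3)$, together with (if needed) a Hurwitz move at position $2$. The key point is that $g_2^{-1}g_1g_2$ is again a conjugate of $s_1$; one must check it is still \emph{short}, i.e. lies in $\{s_0,s_1,s_2\}$. This is where the structure of $\mathbb{Z}_2\ast\mathbb{Z}_3$ helps: the three short conjugates are exactly $\omega b$-type, $b^2\omega$-type, $b\omega b$-type elements, and conjugating one short element by another short element permutes the set $\{s_0,s_1,s_2\}$ (this follows because conjugation by $b$ and by $\omega$ permutes these three elements — e.g. $c_b(s_0)=b(\omega b^2)b^{-1}=\ldots$, and $s_i$ is built from $b$'s and one $\omega$, so conjugating by another $s_j$ is a product of conjugations by $b$'s and $\omega$'s). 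Hence after the move at position $1$ the first two entries remain short. Then either $(g_2',g_3')$ is already badly joined (done), or it is well joined, in which case I would apply a further Hurwitz move \emph{at position $2$} to the pair $(g_2',g_3')$; since both are short, Proposition \ref{prop-disminuir-longitud-pares}(1) tells us a badly-joined short pair is the generic situation, and one checks directly from Remark \ref{notilla} that among short conjugates one can always rotate a badly-joined pair into positions $2,3$.

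Concretely, the slickest route is probably this: because $(g_1,g_2)$ has product $b$, the triple is $(g_1,g_2,g_3)$ with $g_1g_2=b$; apply $H_1$ to get $(g_2, g_2^{-1}g_1g_2, g_3)$. Now $(g_2)\cdot(g_2^{-1}g_1g_2) = g_1 g_2 = b$ still, but that is the product of the first two — what I actually want is the product of the \emph{last} two to be of the form witnessing bad jointness. So instead I would apply the Hurwitz \emph{left} move $H_2^{-1}$ or a right move $H_2$ to the original triple: $H_2(g_1,g_2,g_3) = (g_1, g_3, g_3^{-1}g_2 g_3)$; here $g_3$ and $g_3^{-1}g_2g_3$ are both short (same permutation argument), and their product is $g_3^{-1}(g_2 g_3)$ — not obviously $b$. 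The honest bookkeeping is: I want $(g_2',g_3')$ badly joined, meaning $g_2' g_3' \in\{b, $ conjugates thereof realizing length drop$\}$, i.e. $\{s_0 s_1, s_1 s_2, s_2 s_0\}$-type. Since $g_1 g_2 = b$ and I can cyclically reorganize via Hurwitz moves (a length-$3$ factorization of $g_1g_2g_3$ admits the ``rotation'' $(g_1,g_2,g_3)\mapsto(g_2',g_3',(g_1g_2g_3)^{-1}$-conjugate$)$ type identities), the pair carrying the product $b$ can be moved from positions $1,2$ to positions $2,3$; I would verify this by writing out the at-most-two Hurwitz moves explicitly in each of the three cases for $(g_1,g_2)$, using only the identities in Remark \ref{notilla} to confirm shortness is preserved.

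The main obstacle I anticipate is not conceptual but bookkeeping: one must be careful that ``short'' is genuinely preserved, i.e. that no intermediate conjugate like $g_3^{-1}g_2g_3$ secretly has length $>3$. This is false in general for conjugates of $s_1$ — conjugating a short one by a \emph{long} one lengthens it — so the whole argument hinges on all of $g_1,g_2,g_3$ being short and on the closure property ``$s_i^{-1}s_j s_i\in\{s_0,s_1,s_2\}$.'' I would therefore isolate that closure fact as the one computational lemma (a $3\times 3$ table, or the observation that $\{s_0,s_1,s_2\} = \{\,w\,:\,l(w)=3,\ w$ a conjugate of $s_1\,\}$ is permuted by the inner automorphisms $c_{s_i}$), and then the nine-case verification becomes mechanical, each case resolved by one or two Hurwitz moves read off from Remark \ref{notilla}.
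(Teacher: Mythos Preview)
Your proposal contains a genuine error: the ``closure lemma'' you isolate, namely that $s_i^{-1}s_js_i\in\{s_0,s_1,s_2\}$ for all $i,j$, is \emph{false}. For instance
\[
s_0^{-1}s_1s_0=(b\omega)(b\omega b)(\omega b^2)=b\omega b\omega b\omega b^2,
\]
which is a reduced word of length $7$ and hence a long conjugate. Consequently, the moment you apply a Hurwitz move at position $2$ to a pair $(g_2,g_3)$ that joins \emph{well} (which is precisely the situation you are trying to repair), you may leave the set of short conjugates, and your nine-case verification as planned would break down. The observation that conjugation by $b$ and by $\omega$ permutes $\{s_0,s_1,s_2\}$ does not help here, since the $s_i$ themselves are not single letters.

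The paper's proof sidesteps this entirely by never touching $g_3$. The point is that the three badly-joining pairs $(s_0,s_1),(s_1,s_2),(s_2,s_0)$ form a single $H$-orbit under moves at position $1$ alone: one checks directly that $H_1(s_0,s_1)=(s_1,s_2)$, $H_1(s_1,s_2)=(s_2,s_0)$, $H_1(s_2,s_0)=(s_0,s_1)$. Since for any short $g_3$ there is exactly one $s_j$ with $(s_j,g_3)$ badly joined, one simply iterates $H_1$ on the first two slots until the second entry equals that $s_j$, and sets $g_3'=g_3$. You actually came close to this when you wrote ``apply $H_1$ to get $(g_2,g_2^{-1}g_1g_2,g_3)$'', but then abandoned it in favor of the $H_2$ route; the $H_1$-only argument is both correct and avoids the shortness issue altogether, because Hurwitz moves within a bad pair stay within the three bad pairs.
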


\begin{proof}
The only pairs of short conjugates that do not join well are $%
(s_{0},s_{1}),(s_{1},s_{2}),$

$(s_{2},s_{0})$. It follows that for each $s_{i}$ there exists an $s_{j}$
such that $(s_{j},s_{i})$ does not join well. We also notice that any two of
these pairs are $H$-equivalent. Hence, for $g_{3},$ there is $s_{j}$ such
that $(s_{j},g_{3})$ does not join well. Therefore, after a Hurwitz move
performed on the pair $(g_{1},g_{2})$, transforming it into $(g_{1}^{\prime
},g_{2}^{\prime }),$ with $g_{2}^{\prime }=s_{j}$, then, the factorization $%
(g_{1}^{\prime },g_{2}^{\prime },g_{3}^{\prime })$ with $g_{3}^{\prime
}=g_{3}$, is Hurwitz equivalent to $(g_{1},g_{2},g_{3}),$ and $%
(g_{2}^{\prime },g_{3}^{\prime })$ does not join well.
\end{proof}

\begin{proposition}
\label{lema-prop-corto-Hurwitz-equivalente}
\end{proposition}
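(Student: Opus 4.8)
I read Proposition \ref{lema-prop-corto-Hurwitz-equivalente} as the normalization statement for all-short special factorizations: every special factorization $(g_1,\dots,g_n)$ all of whose entries are short is $H$-equivalent to one, again with all entries short and with the same number of entries, whose badly joined consecutive pairs (if there are any) occupy a terminal block; equivalently it splits as $\beta\ast\gamma$ with $\beta$ well jointed and all-short and $\gamma$ an ``all-bad'' run, which by Remark \ref{notilla} is forced to be of the form $(s_j,s_{j+1},\dots)$ with indices read modulo $3$. I will describe how I would reach this.

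The engine is Proposition \ref{prelema-prop-corto-Hurwitz-equivalente}. From its proof one extracts that the Hurwitz orbit of a badly joined pair of short conjugates is exactly $\{(s_0,s_1),(s_1,s_2),(s_2,s_0)\}$ (Remark \ref{notilla}), so any Hurwitz move applied to a badly joined pair of short conjugates stays among short conjugates, and the move used there keeps the left pair badly joined while forcing the right pair to be badly joined. Embedding this at positions $i,i+1,i+2$ of a longer all-short factorization (where $(g_i,g_{i+1})$ is bad) alters only $g_i,g_{i+1}$, leaves $g_{i+2},\dots,g_n$ untouched, and makes $(g_{i+1},g_{i+2})$ badly joined. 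Hence, starting from an all-short factorization that has at least one bad joint and repeatedly applying this at the \emph{current rightmost} bad joint, that rightmost position strictly increases at each step (the tail to its right is never disturbed) and is bounded by $n-1$; after finitely many Hurwitz moves we obtain an $H$-equivalent all-short factorization whose last consecutive pair is badly joined. This disposes of $n\le 2$ outright and is the base of the normalization.

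For the full statement I would induct on $n$. After the previous step the factorization has the form $(g_1',\dots,g_{n-2}',s_i,s_{i+1})$ with $(s_i,s_{i+1})$ bad; apply the inductive hypothesis to the all-short $(g_1',\dots,g_{n-2}')$ to get $\beta\ast\gamma$ with $\beta$ well jointed and $\gamma=(s_k,\dots,s_{k+q})$ an all-bad run, so $(g_1,\dots,g_n)\sim_H\beta\ast\gamma\ast(s_i,s_{i+1})$. If the joint between the last entry $s_{k+q}$ of $\gamma$ and $s_i$ is bad, then $\gamma\ast(s_i,s_{i+1})$ is a single all-bad run and we are done. Otherwise one must absorb the trailing bad pair into a single run: here I would use the arithmetic of all-bad runs that is implicit in Remark \ref{notilla}, namely that $s_l s_{l+1}\cdots s_{l+r}$ depends only on $r\bmod 6$ and on $l\bmod 3$ (in particular a run of length $6$ has trivial product), so a run can be lengthened by $6$ and its badly joined pairs re-routed; combined with Hurwitz moves confined to the right end, this lets one rewrite $\gamma\ast(s_i,s_{i+1})$ as $\beta'\ast\gamma'$ with $\beta'$ well jointed and $\gamma'$ a single all-bad run, at the cost of enlarging the well jointed prefix.

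The main obstacle is the bookkeeping concealed in the previous paragraph. A Hurwitz move on a badly joined pair $(g_i,g_{i+1})$ also changes $g_i$, hence the joint $(g_{i-1},g_i)$ to its left; a short computation with $s_0,s_1,s_2$ (using $b\,s_m^{-1}=s_{m-1}$) shows that if $(g_{i-1},g_i)$ was badly joined it may flip to a well joint, so a naive ``push all badness to the right'' can manufacture a new badly joined pair further to the left. Consequently the termination argument cannot merely count badly joined pairs; it needs either a refined monotone quantity (recording the positions of the badly joined pairs read from the right) or, as above, the induction on $n$ together with a finite-case merging lemma for two all-bad blocks, the finitely many cases being settled by Remark \ref{notilla}. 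Once no badly joined pair is immediately followed by a well joined one, the badly joined pairs form a terminal block, that block is a run $(s_j,s_{j+1},\dots)$ by Remark \ref{notilla}, every entry is short, and the number of entries has been preserved throughout — which is the asserted conclusion.
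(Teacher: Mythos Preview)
You have over-read the statement. Proposition~\ref{lema-prop-corto-Hurwitz-equivalente} only asserts: (1) if an all-short factorization has \emph{some} badly joined consecutive pair, then it is $H$-equivalent to an all-short factorization whose \emph{last} pair $(g_{n-1}',g_n')$ is badly joined; and (2) one can in fact take that last pair to be $(s_0,s_1)$. It does \emph{not} claim that all badly joined pairs can be collected into a terminal block. The stronger normalization you are trying to establish --- a well-jointed prefix followed by a run of $(s_0,s_1)$'s --- is precisely Proposition~\ref{prop-corto-Hurwitz-equivalente}, the next proposition in the paper.

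Your second paragraph already contains a complete and correct proof of the actual statement, and by the same mechanism as the paper: apply Proposition~\ref{prelema-prop-corto-Hurwitz-equivalente} at the current rightmost badly joined pair to push it one step to the right, iterate until it sits at position $n-1$, then use that the Hurwitz orbit of any badly joined short pair is $\{(s_0,s_1),(s_1,s_2),(s_2,s_0)\}$ to convert the final pair to $(s_0,s_1)$. The paper phrases the same induction via the counter $k(\alpha)=n-\max\{r:(g_r,g_{r+1})\text{ joins badly}\}$, but it is the identical argument.

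Your remaining paragraphs, aimed at the stronger conclusion, are therefore unnecessary here --- and, as you yourself note, the block-merging step is not fully justified. For comparison, the paper proves Proposition~\ref{prop-corto-Hurwitz-equivalente} not by induction on $n$ and merging, but by a maximality argument: among all all-short factorizations $H$-equivalent to $\alpha$, pick one with the largest number $r$ of trailing $(s_0,s_1)$ pairs, and use part~(2) of the present proposition to show that its length-$m$ prefix must be well jointed (otherwise $r$ could be increased). This sidesteps entirely the bookkeeping difficulty you identified.
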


\begin{enumerate}
\item Every factorization $(g_{1},\ldots ,g_{n})$ where each $g_{i}$ is a
short conjugate, and where not all pairs of elements $g_{i},g_{i+1}$ join
well, is $H$-equivalent to a factorization $(g_{1}^{\prime },\ldots
,g_{n}^{\prime }),$ where $(g_{n-1}^{\prime },g_{n})$ join badly.

\item Every factorization $(g_{1},\ldots ,g_{n})$ in short conjugates where
not all pairs of elements $g_{i},g_{i+1}$ join well is $H$-equivalent to a
factorization $(g_{1}^{\prime },\ldots ,g_{n}^{\prime })$ in which $%
(g_{n-1}^{\prime },g_{n})=(s_{0},s_{1})$.
\end{enumerate}

\begin{proof}
For each factorization $\alpha =(g_{1},\ldots ,g_{n})$ $(n\geq 2)$ in short
conjugates where not all pairs of elements $g_{i},g_{i+1}$ join well we
associate the integer $k(\alpha )=n-\max \{r:(g_{r},g_{r+1})\ $does not join
well$\}$. The proof proceeds by induction on $k$. If $k=1$, then $%
(g_{n-1},g_{n})$ join badly, and the result follows. For $k_{0}\geq 1,$ let
us suppose that the result is true for all $\alpha $ such that $k(\alpha
)\leq k_{0}$. Let $\beta =(g_{1},\ldots ,g_{n})$ be a factorization with $%
k(\beta )=k_{0}+1$. This implies that $(g_{n-k_{0}-1},g_{n-k_{0}})$ does not
join well. Applying Proposition \ref{prelema-prop-corto-Hurwitz-equivalente}
we infer that $(g_{n-k_{0}-1},g_{n-k_{0}},g_{n-k_{0}+1})$ is $H$-equivalent
to a factorization $(g_{n-k_{0}-1}^{\prime },g_{n-k_{0}}^{\prime
},g_{n-k_{0}+1}^{\prime })$ in short conjugates, such that $%
(g_{n-k_{0}}^{\prime },g_{n-k_{0}+1}^{\prime })$ join badly. Summarizing,
the original factorization $\beta $ is $H$-equivalent to a factorization in
short conjugates $\beta ^{\prime }=(g_{1}^{\prime },\ldots ,g_{n}^{\prime })$
in which $(g_{n-k_{0}}^{\prime },g_{n-k_{0}+1}^{\prime })$ join badly.
Clearly $k(\beta ^{\prime })<k(\beta )$, thus the proposition holds for $%
\beta ^{\prime }$, i.e., $\beta ^{\prime }$ is $H$-equivalent to another
factorization in short conjugates $\beta ^{\prime \prime }=(g_{1}^{\prime
\prime },\ldots ,g_{n}^{\prime \prime })$ in which $(g_{n-1}^{\prime \prime
},g_{n}^{\prime \prime })$ join badly. We conclude that the result also
holds $\beta ,$ since $\beta $ is Hurwitz equivalent to $\beta ^{\prime
\prime }$. This proves the first statement. The second assertion easily
follows from the fact that all pairs of short conjugates that join badly are 
$H$-equivalent to $(s_{0},s_{1})$.
\end{proof}

\begin{proposition}
\label{prop-corto-Hurwitz-equivalente} Every factorization $(g_{1},\ldots
,g_{n})$ in short conjugates is $H$-equivalent to another factorization in
short conjugates, of the form $(g_{1}^{\prime },\ldots ,g_{m}^{\prime
},s_{0},s_{1},\ldots ,s_{0},s_{1})$, $0\leq m\leq n,$ where there are $%
(n-m)/2$ pairs of $s_{0},s_{1}$, and $(g_{1}^{\prime },\ldots ,g_{m}^{\prime
})$ is well jointed.
\end{proposition}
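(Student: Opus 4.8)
The plan is to induct on $n-m(\alpha)$, where for a factorization $\alpha=(g_1,\dots,g_n)$ in short conjugates I set $m(\alpha)$ to be the length of the longest well-jointed initial segment, i.e. the largest $m\le n$ such that $(g_1,\dots,g_m)$ is well jointed (so $m(\alpha)\ge 1$ always, and $m(\alpha)=n$ means $\alpha$ is already well jointed, giving the base case with no $s_0,s_1$ pairs appended). The induction is on the quantity $n-m(\alpha)$. If $n-m(\alpha)=0$ the conclusion holds trivially. Suppose $n-m(\alpha)\ge 1$; then $(g_1,\dots,g_{m+1})$ is \emph{not} well jointed while $(g_1,\dots,g_m)$ is, which forces the pair $(g_m,g_{m+1})$ to join badly (all earlier consecutive pairs join well). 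Apply Proposition~\ref{lema-prop-corto-Hurwitz-equivalente}(2) to the tail factorization $(g_{m+1},g_{m+2},\dots,g_n)$ — which is in short conjugates and has a badly-joined consecutive pair, namely at its very start — to conclude it is $H$-equivalent to a factorization of the same length whose last pair equals $(s_0,s_1)$.

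Here is where some care is needed: I want to split off \emph{one} pair $(s_0,s_1)$ at the right end and then recurse. So rather than applying the proposition to the whole tail, I will use the more basic Proposition~\ref{prelema-prop-corto-Hurwitz-equivalente} repeatedly, "sliding" the badly-joined pair to the right one position at a time: from $(g_m,g_{m+1},g_{m+2})$ with $(g_m,g_{m+1})$ joining badly, obtain an $H$-equivalent triple whose \emph{last} two entries join badly; iterate until the badly-joined pair sits in positions $n-1,n$. Since every badly-joined pair of short conjugates is $H$-equivalent to $(s_0,s_1)$ (Remark~\ref{juntan-mal}, as used in the proof of Proposition~\ref{lema-prop-corto-Hurwitz-equivalente}(2)), one more Hurwitz move puts the last two entries literally equal to $(s_0,s_1)$. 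Thus $\alpha$ is $H$-equivalent to some $\beta=(h_1,\dots,h_{n-2},s_0,s_1)$ in short conjugates.

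Now I claim $n-2-m(\beta')<n-m(\alpha)$, where $\beta'=(h_1,\dots,h_{n-2})$ is the truncation of $\beta$, so that the inductive hypothesis applies to $\beta'$. This is not automatic from the construction, so I will argue it directly: since Hurwitz moves preserve the product, $\operatorname{prod}(\beta')=\operatorname{prod}(\alpha)\cdot(s_0s_1)^{-1}=\operatorname{prod}(\alpha)\cdot b^{-1}$ using $s_0s_1=b$ from Remark~\ref{notilla}. The cleanest route is to make the recursion work on the \emph{number of factors}: by the inductive hypothesis applied to $\beta'$ (which has $n-2<n$ factors), $\beta'$ is $H$-equivalent to $(g_1',\dots,g_{m}',s_0,s_1,\dots,s_0,s_1)$ with $(g_1',\dots,g_m')$ well jointed and $(n-2-m)/2$ trailing pairs; appending the split-off $(s_0,s_1)$ — a Hurwitz move in positions past $n-2$ does not disturb this — yields the desired form for $\alpha$ with one more pair. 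So the induction should be organized on $n$ (strong induction), with $n=0,1$ trivial and $n\ge 2$ handled as above; the reduction step decreases the factor count by exactly $2$.

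The main obstacle is the "sliding" step: one must check that Proposition~\ref{prelema-prop-corto-Hurwitz-equivalente} can indeed be chained without the already-processed trailing $(s_0,s_1)$ pairs getting scrambled, i.e. that all the Hurwitz moves used to move the badly-joined pair rightward act in positions $\le n-1$ in the final configuration and hence commute (as maps) with whatever was done to the tail — but since I am doing the reduction on $\beta'$ \emph{after} splitting off the last pair, and Hurwitz moves at positions $\le n-3$ never touch entries $n-1,n$, this is clean. A secondary check is that each intermediate factorization still consists of short conjugates and still has a badly-joined consecutive pair somewhere, so that Proposition~\ref{prelema-prop-corto-Hurwitz-equivalente} remains applicable; this is exactly the content of that proposition's statement, so nothing new is required. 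Hence the proof reduces to bookkeeping once the single-pair split-off is in hand.
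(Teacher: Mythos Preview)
Your proof is correct, but it takes a different route from the paper's. The paper gives a non-constructive maximality argument: among all factorizations in short conjugates that are $H$-equivalent to $\alpha$, pick one $\gamma$ with the maximal number $r(\gamma)$ of trailing $(s_0,s_1)$ pairs (this maximum exists since there are only $3^n$ length-$n$ tuples in short conjugates); if the remaining prefix were badly jointed, Proposition~\ref{lema-prop-corto-Hurwitz-equivalente}(2) applied to that prefix would yield one more trailing $(s_0,s_1)$ pair, contradicting maximality. Your argument is instead a constructive strong induction on $n$: locate a bad join, slide it to the right end, normalize it to $(s_0,s_1)$, peel it off, and recurse on the length-$(n-2)$ prefix.

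Two remarks on the write-up. First, the opening induction scheme on $n-m(\alpha)$ is abandoned midway; the exposition would be cleaner if you committed to strong induction on $n$ from the outset. Second, your ``sliding'' paragraph is precisely the proof of Proposition~\ref{lema-prop-corto-Hurwitz-equivalente}; rather than re-deriving it, you can simply invoke that proposition on $\alpha$ itself (not on the tail $(g_{m+1},\ldots,g_n)$, which, as you rightly observe, need not contain a bad join internally) to obtain an $H$-equivalent $(h_1,\ldots,h_{n-2},s_0,s_1)$ in one step. With that simplification your argument collapses to a short induction. What your approach gains over the paper's is constructivity --- it yields an explicit recursive procedure, in keeping with the algorithmic aims of the paper; what the paper's approach gains is brevity.
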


\begin{proof}
Let $\alpha =(g_{1},\ldots ,g_{n})$ be a factorization in short conjugates.
Each factorization $\beta =(h_{1},\ldots ,h_{n})$ in short conjugates that
is $H$-equivalent to $\alpha $ can be written uniquely as $(h_{1},\ldots
,h_{m},s_{0},s_{1},\ldots ,s_{0},s_{1})$ where there are $r\geq 0$ pairs $%
s_{0},s_{1}$, and where $m\geq 0$ and $(h_{m-1},h_{m})\neq (s_{0},s_{1}),$
if $m\geq 2$. The integer $r$ will be denoted by $r(\beta )$ to indicate its
dependence on $\beta $. Let $\gamma =(g_{1}^{\prime },\ldots ,g_{m}^{\prime
},s_{0},s_{1},\ldots ,s_{0},s_{1})$ be a factorization in short conjugates, $%
H$-equivalent to $\alpha $, such that $r(\gamma )\geq r(\beta )$ for any
other factorization in short conjugates $\beta $, $H$-equivalent to $\alpha $%
. Let us verify that $(g_{1}^{\prime },\ldots ,g_{m}^{\prime })$ is well
jointed. If $(g_{1}^{\prime },\ldots ,g_{m}^{\prime })$ is badly jointed,
and $m\geq 2,$ by the second part of Proposition \ref%
{lema-prop-corto-Hurwitz-equivalente} there would be another factorization
in short conjugates $(g_{1}^{\prime \prime },\ldots ,g_{m}^{\prime \prime })$
$H$-equivalent to $(g_{1}^{\prime },\ldots ,g_{m}^{\prime })$, and such that 
$(g_{m-1}^{\prime \prime },g_{m}^{\prime \prime })=(s_{0},s_{1})$. Hence, $%
\gamma $ would also be (and, therefore $\alpha $), $H$-equivalent to a
factorization in short conjugates $(g_{1}^{\prime \prime },\ldots
,g_{m-2}^{\prime \prime },s_{0},s_{1},\ldots ,s_{0},s_{1})$ with $r(\gamma
)+1\ $pairs $s_{0},s_{1}$, in contradiction with the maximality of $\gamma .$
Thus, $(g_{1}^{\prime },\ldots ,g_{m}^{\prime })$ is well jointed.
\end{proof}

\begin{proposition}
Each special factorization $(g_{1},\ldots ,g_{n})$ is $H$-equivalent to a
factorization of the form $(g_{1}^{\prime },\ldots ,g_{m}^{\prime
},s_{0},s_{1},\ldots ,s_{0},s_{1})$, where there are $r\geq 0$ pairs $%
s_{0},s_{1}$, and where $(g_{1}^{\prime },\ldots ,g_{m}^{\prime })$ is well
jointed. Moreover, $(g_{1}^{\prime },\ldots ,g_{m}^{\prime })$ is a
factorization in short conjugates, whenever $r>0$.
\end{proposition}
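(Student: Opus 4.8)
The plan is to combine Proposition~\ref{prop-transformacion} with Proposition~\ref{prop-corto-Hurwitz-equivalente}. Given a special factorization $\alpha=(g_1,\ldots,g_n)$, apply Proposition~\ref{prop-transformacion} to obtain a Hurwitz-equivalent special factorization $\beta=(h_1,\ldots,h_n)$ that falls into one of two cases: either every $h_i$ is short, or $\beta$ is well jointed and at least one $h_i$ is long. In the second case $\beta$ itself already has the desired form, with $m=n$, $r=0$, and no short conjugates required (the clause ``whenever $r>0$'' is vacuous). So the whole statement reduces to the case where every $h_i$ is short.

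In that case I would invoke Proposition~\ref{prop-corto-Hurwitz-equivalente} directly: $\beta$, being a factorization in short conjugates, is $H$-equivalent to one of the form $(g_1',\ldots,g_m',s_0,s_1,\ldots,s_0,s_1)$ with $r=(n-m)/2\ge 0$ pairs of $s_0,s_1$, where $(g_1',\ldots,g_m')$ is well jointed; moreover each $g_i'$ here is again short, since that proposition keeps us inside factorizations in short conjugates. Chaining the two $H$-equivalences and using transitivity of $\sim_H$ gives that $\alpha$ is $H$-equivalent to the claimed normal form, and in this branch the $g_i'$ are short regardless of whether $r>0$.

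The only genuine content beyond quoting the two prior results is reconciling the two output shapes. When $\beta$ comes out well jointed with a long factor, set $m=n$ and $r=0$; when $\beta$ comes out all-short, the tail of $s_0,s_1$ pairs may be nonempty and the head is short and well jointed. In both situations the stated conclusion holds, and the extra assertion ``$(g_1',\ldots,g_m')$ is a factorization in short conjugates whenever $r>0$'' is automatically satisfied: $r>0$ forces us into the all-short branch (in the long branch $r=0$), and there the head consists of short conjugates.

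I do not anticipate a serious obstacle here, since the two cited propositions do essentially all the work; the main thing to be careful about is the bookkeeping of which branch produces which value of $r$, and noting that the empty and length-one cases are covered because the empty factorization and singletons are special and well jointed by convention, so they already have the required form with $m=n$ and $r=0$.
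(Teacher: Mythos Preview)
Your proposal is correct and follows essentially the same route as the paper: apply Proposition~\ref{prop-transformacion} to split into the well-jointed-with-a-long-factor case (take $r=0$) and the all-short case, then invoke Proposition~\ref{prop-corto-Hurwitz-equivalente} in the latter. The paper's proof differs only cosmetically, in that it further splits the all-short case into well-jointed and badly-jointed sub-cases before applying Proposition~\ref{prop-corto-Hurwitz-equivalente}; since that proposition covers both sub-cases anyway, your slightly more direct phrasing is equivalent.
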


\begin{proof}
By Proposition \ref{prop-transformacion}, $(g_{1},\ldots ,g_{n})$ is $H$%
-equivalent to a factorization $\beta =(g_{1}^{\prime },\ldots
,g_{n}^{\prime })$ that either, is well jointed and at least one of the $%
g_{i}^{\prime }s$ is a long conjugate, or it is badly jointed and all $%
g_{i}^{\prime }s$ are short conjugates. In the first case, $\beta $ already
has the desired form, since the fact that the factors join well implies that 
$(g_{n-1}^{\prime },g_{n}^{\prime })\neq (s_{0},s_{1}),$ and consequently $%
r=0$. Now, in case $\beta $ consists of short conjugates that join well,
then it also has already the desired form for the same reason. Hence, let us
suppose that $\beta $ is a factorization in short conjugates that is badly
joined. By Proposition \ref{prop-corto-Hurwitz-equivalente}, this
factorization is $H$-equivalent to another one in short conjugates, of the
form $(g_{1}^{\prime \prime },\ldots ,g_{m}^{\prime \prime
},s_{0},s_{1},\ldots ,s_{0},s_{1}),$ with $(n-m)/2$ pairs $s_{0},s_{1}$, and
where $(g_{1}^{\prime \prime },\ldots ,g_{m}^{\prime \prime })$ is well
jointed.
\end{proof}

An immediate consequence is the following theorem.

\begin{theorem}
\label{central}For each $g\in \mathcal{M}$, the set of all special
factorizations of $g$ having either of the following two forms is $H$%
-complete:

\begin{enumerate}
\item $(g_{1},\ldots ,g_{m},s_{0},s_{1},\ldots ,s_{0},s_{1}),$ where there
are $r>0$ pairs $s_{0},s_{1}$, $(g_{1},\ldots ,g_{m})$ is well jointed, and
each $g_{i}$ is short.

\item $(g_{1},\ldots ,g_{p})$, where this factorization is well jointed.
\end{enumerate}
\end{theorem}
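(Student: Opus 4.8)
The theorem asserts that the family of special factorizations of a fixed $g\in\mathcal M$ falling into one of the two listed normal forms is $H$-complete, i.e.\ meets every $\sim_H$-class. Since $H$-equivalence preserves the product, it suffices to show that \emph{every} special factorization $\alpha=(g_1,\dots,g_n)$ is $H$-equivalent to one of the two forms. This is almost verbatim the content of the proposition immediately preceding the theorem statement, so the proof is essentially a bookkeeping argument that repackages that proposition's conclusion and verifies it already lands in the claimed family.

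\textbf{Step 1: Invoke the preceding proposition.} Given a special factorization $\alpha=(g_1,\dots,g_n)$, the proposition just above guarantees that $\alpha\sim_H\beta$ where $\beta=(g_1',\dots,g_m',s_0,s_1,\dots,s_0,s_1)$ with $r\ge 0$ pairs $s_0,s_1$, the block $(g_1',\dots,g_m')$ is well jointed, and moreover $(g_1',\dots,g_m')$ consists of short conjugates whenever $r>0$.

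\textbf{Step 2: Split on the value of $r$.} If $r>0$, then by the moreover-clause every $g_i'$ is short, and $(g_1',\dots,g_m')$ is well jointed, so $\beta$ is exactly of type (1) in the theorem — here I should note the edge case $m=0$, in which the factorization is simply $(s_0,s_1,\dots,s_0,s_1)$, still of type (1) with an empty well-jointed prefix. If $r=0$, then $\beta=(g_1',\dots,g_m')=(g_1',\dots,g_n')$ is a well-jointed special factorization, which is precisely type (2) (with $p=n$); the empty factorization, which occurs when $g$ is expressible as a product of zero conjugates of $s_1$, is included here as a well-jointed special factorization by the stated conventions.

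\textbf{Step 3: Conclude $H$-completeness.} In all cases $\alpha$ is $H$-equivalent to a factorization of $g$ of type (1) or type (2). Hence the collection of all factorizations of $g$ having one of these two forms contains a representative of the $H$-class of $\alpha$; since $\alpha$ was arbitrary, this collection is $H$-complete by Definition~\ref{H completo}.

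\textbf{Expected main obstacle.} There is essentially no hard analytic step: the entire weight of the argument rests on the chain of propositions (\ref{prop-transformacion}, \ref{prop-corto-Hurwitz-equivalente}, and the final unnamed proposition), all of which are cited from \cite{Friedman-Morgan}. The only real care needed is the boundary-case bookkeeping — making sure the conventions on the empty factorization and on factorizations with $m=0$ or with all entries being the $s_0,s_1$ pairs are consistent with the definitions of \emph{well jointed} and \emph{special} given earlier, so that the two stated forms genuinely exhaust the output of the preceding proposition. Once those conventions are checked, the theorem is an immediate corollary, which is exactly why the text introduces it as "an immediate consequence."
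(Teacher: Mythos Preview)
Your proposal is correct and follows exactly the approach the paper intends: the theorem is stated as ``an immediate consequence'' of the preceding proposition, and your case split on $r=0$ versus $r>0$ is precisely the repackaging needed to see that the proposition's output always lands in form (1) or form (2). The edge-case checks you mention (empty prefix, empty factorization) are handled by the paper's conventions just as you say, so there is nothing to add.
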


\section{An algorithm to produce $H$-complete sets}

For $h$ in the modular group, let us denote by $WJ(h)$ the set formed by all 
\emph{special factorizations} of $h$ that are well jointed, and by $WJS(h)$
the subset of factorizations in short conjugates. Remember that we regard
the empty factorization $(\ )$ as a \emph{well jointed} \emph{special
factorization of the identity }$1,$ \emph{in short conjugates}.

Since $s_{0}s_{1}=b$ and $b^{3}=1$, we have that $(s_{0}s_{1})^{3k+l}$
equals $1$ if $l=0$, $b$ if $l=1$ and $b^{2}$ if $l=2$. According to Theorem %
\ref{central}, for any fixed element $g$, the union of the following four
sets of factorizations of $g$ is $H$-complete:

\begin{enumerate}
\item $A=\{\alpha :$ $\alpha $ is a \emph{well jointed} \emph{special}
factorization of $g\}$.

\item $B=\{(g_{1},\ldots ,g_{m},s_{0},s_{1},\ldots ,s_{0},s_{1}):$ $%
(g_{1},\ldots ,g_{m})$ is a \emph{well jointed} \emph{special} factorization
of $g$ in\emph{\ short} conjugates and the number of pairs $s_{0},s_{1}$ is
of the form $3k,$ with $k\geq 1\}.$

\item $C=\{(g_{1},\ldots ,g_{m},s_{0},s_{1},\ldots ,s_{0},s_{1}):$ $%
(g_{1},\ldots ,g_{m})$ is a \emph{well jointed} \emph{special} factorization
of $gb^{2}$ in \emph{short} conjugates and the number of pairs $s_{0},s_{1}$
is of the form $3k+1,$ with $k\geq 0\}.$

\item $D=\{(g_{1},\ldots ,g_{m},s_{0},s_{1},\ldots ,s_{0},s_{1}):$ $%
(g_{1},\ldots ,g_{m})$ is a \emph{well jointed} \emph{special} factorization
of $gb$ in \emph{short} conjugates and the number of pairs $s_{0},s_{1}$ is
of the form $3k+2,$ with $k\geq 0\}.$
\end{enumerate}

In consequence:

\begin{remark}
\label{superutil}In order to find an $H$-complete set of special
factorizations of an element $g$ we need i) an algorithm that takes an
element $h$ in the modular group, and produces the set $WJ(h)$, and ii) an
algorithm that extracts the subset $WJS(h)$.
\end{remark}

This second task is trivial, but the first one is less so. The key
ingredient to formulate the algorithm in i) is discussed next.

\begin{definition}
We define the \emph{left part} of short conjugates of $b\omega b$ as $%
left(s_{0})=left(\omega b^{2})=\omega $, $left(s_{1})=left(b\omega
b)=b\omega $, $left(s_{2})=left(b^{2}\omega )=b^{2}\omega $. For long
conjugates, $left(P^{-1}b\omega bP)=P^{-1}b\omega ,$ where $P$ is an element
of the modular group that begins with $\omega $.
\end{definition}

The following result is Lemma 2.4 in \cite{Matsumoto}.

\begin{proposition}
If $(h_{1},\ldots ,h_{n})$ with $n\geq 1$ is a special factorization that is
well jointed, its product $h_{1}\cdots h_{n}$ begins with $left(h_{1})$.
\end{proposition}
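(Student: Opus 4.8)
The plan is to proceed by induction on the length $n$ of the factorization. For $n = 1$ the claim is exactly the observation that each short conjugate $s_i$ begins with its left part by the definition of $left$, and that a long conjugate $P^{-1}b\omega b P$ has reduced expression beginning with $P^{-1}b\omega$ (since $P$ begins with $\omega$, so $P^{-1}$ ends with $\omega$, and the juncture $P^{-1}(b\omega b)$ produces no cancellation because $P^{-1}$ ends in $\omega$ and $b\omega b$ starts with $b$); thus $left(P^{-1}b\omega b P) = P^{-1}b\omega$ is genuinely an initial segment of the reduced word for the conjugate. So the base case is a direct unpacking of definitions together with the structural description of conjugates of $s_1$ recalled before Remark \ref{notilla}.

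For the inductive step, write $h' = h_2 \cdots h_n$; by the inductive hypothesis the reduced expression of $h'$ begins with $left(h_2)$. I want to show the reduced expression of $h_1 h'$ begins with $left(h_1)$. The key structural fact is that $left(h_1)$ is a proper initial segment of the reduced word of $h_1$, and correspondingly $h_1$ factors (as a reduced word) as $left(h_1) \cdot (\text{rest})$, where the "rest" ends in $\omega$ when $h_1$ is $s_1$ or long, and is just $b^2$ or $b$ in the cases $s_0, s_2$. Since $(h_1, h_2)$ join well, $l(h_1 h_2) \ge \max(l(h_1), l(h_2))$, so the cancellation at the juncture of $h_1$ and $h_2$ is strictly less than $\min(l(h_1), l(h_2))$; by the analysis in equation (\ref{F0}), at most the tail portion of $h_1$ past $left(h_1)$ is consumed, and in fact the amount cancelled is small enough that $left(h_1)$ survives intact in $h_1 h_2$. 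The point to make precise is that the number of cancellations at the $h_1$–$h_2$ juncture is bounded by the common length minus the length of the "core" $s_1$ (length $3$), i.e. one cannot cancel past the middle $b\omega b$ of either conjugate; this is essentially the content of joining well. Then, passing from $h_1 h_2$ to $h_1 h'$ only adds more letters on the right of $h'$ (the reduced word of $h'$ extends that of $h_2$ up to the first possible cancellation point, and any further cancellation when forming $h_1 h'$ happens even further to the right than in $h_1 h_2$, hence still does not touch $left(h_1)$). Therefore the reduced expression of $h_1 h' = h_1 \cdots h_n$ begins with $left(h_1)$.

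The main obstacle I anticipate is the bookkeeping in the inductive step: one must carefully control where cancellation occurs when multiplying $h_1$ by the long word $h'$ rather than just by $h_2$, and show it never reaches back past $left(h_1)$. The clean way to organize this is to prove the slightly stronger statement that the reduced word for $h_1 \cdots h_n$ not only begins with $left(h_1)$ but that $left(h_1)$ is followed by a letter compatible with the "rest" of $h_1$ (so that the initial segment is really stable under further left-multiplication), which is exactly what makes the induction go through — this is the same strengthening used implicitly in Lemma 2.4 of \cite{Matsumoto}. Everything else reduces to the free-product normal-form computation already carried out around (\ref{F0}) and to Remark \ref{notilla}, which enumerates the well-jointed products of short conjugates and confirms in each case that $left$ of the first factor appears as the initial segment.
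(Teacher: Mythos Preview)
The paper does not actually prove this proposition: it simply cites Matsumoto's Lemma~2.4. So there is nothing to compare against in the paper itself, and your inductive strategy is precisely the route Matsumoto takes.

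That said, your inductive step as written has a genuine gap, and the fix you gesture at is not quite enough as stated. The problematic sentence is: ``the reduced word of $h'$ extends that of $h_2$ up to the first possible cancellation point, and any further cancellation when forming $h_1 h'$ happens even further to the right than in $h_1 h_2$.'' The inductive hypothesis only tells you that $h'$ begins with $left(h_2)$, not with all of $h_2$; since $left(h_2)$ is strictly shorter than $h_2$, the words $h'$ and $h_2$ can diverge \emph{before} the end of $h_2$ (e.g.\ if $h_2=s_0=\omega b^2$ then $left(h_2)=\omega$ and $h'$ might continue with $b$ rather than $b^2$). So you cannot simply transfer the cancellation analysis from $h_1 h_2$ to $h_1 h'$ without first knowing that the cancellation at the $h_1$--$h_2$ juncture is confined, on the $h_2$ side, to the segment $left(h_2)$. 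But that confinement statement is essentially a mirror image of what you are trying to prove, so invoking it is circular.

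What actually makes the induction close is a genuinely two-sided strengthening: one shows that for a well-jointed special factorization, the reduced word of the product both \emph{begins} with $left(h_1)$ and \emph{ends} with the analogous ``right part'' of $h_n$ (equivalently, one tracks the exact letter following $left(h_1)$, as you suggest, \emph{and} the exact letter preceding the right tail). With both ends controlled, the juncture computation $h_1\cdot h'$ becomes a finite case check on the last letter of $h_1$ against the first letter of $left(h_2)$, using the free-product normal form from~(\ref{F0}) and the enumeration in Remark~\ref{notilla}. Your proposal identifies the need for a strengthening but states it only on the left; you should make the symmetric control explicit and then carry out the case analysis.
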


\begin{proof}
See \cite{Matsumoto}.
\end{proof}

According to this result, if $(h_{1},\ldots ,h_{n})$ is a well jointed
special factorization of an element $h$ in the modular group, then $h_{1}$
is either one of the following:

\begin{enumerate}
\item $\omega b^{2}$ if $h$ begins with $\omega $,

\item $b\omega b$ if $h$ begins with $b\omega $,

\item $b^{2}\omega $ if $h$ begins with $b^{2}\omega $,

\item $P^{-1}b\omega bP$ if $h$ begins with $P^{-1}b\omega $ for any $P$
that begins with $\omega $.
\end{enumerate}

In particular, the element $1$ has only one well jointed special
factorization, namely the empty factorization. Also, $b$ and $b^{2}$ admit
no well jointed special factorization.

Now we give an algorithm, that we will call \textit{FirstFactor}, which
takes as input any element $h$ in the modular group, with $h$ not in the set 
$\{1,b,b^{2}\}$, and produces all possible candidates to be first factors in
any well jointed special factorization of $h$. This algorithm outputs a set
that contains:

\begin{description}
\item[a] $\omega b^{2},$ if $h$ begins with $\omega $,

\item[b] $b\omega b,$ if $h$ begins with $b\omega $,

\item[c] $b^{2}\omega ,$ if $h$ begins with $b^{2}\omega $

\item[d] For each occurrence of $b\omega $ that is not at the beginning of $%
h $, the element $P^{-1}b\omega bP,$ where $P^{-1}$ is the initial section
of $h$ ending right before the occurrence of $b\omega $ starts.
\end{description}

Then we define another algorithm that we will call \textit{Sibling}. This
algorithm receives as input an ordered pair $((g_{1},\ldots ,g_{n}),z),$
where $(g_{1},\ldots ,g_{n})$ is a special factorization and $z$ is any
element in the modular group. Then, \textit{Sibling} takes the following
actions:

\begin{enumerate}
\item If $z=1,$ then \textit{Sibling} outputs the set $\{((g_{1},\ldots
,g_{n}),z)\}$.

\item If $z$ is $b$ or $b^{2}$, then \textit{Sibling} outputs the empty set $%
\{$ $\}$.

\item If $z$ is different from $1$, $b$ and $b^{2}$, and $(g_{1},\ldots
,g_{n})$ is the empty factorization, then \textit{Sibling} computes the
(necessarily nonempty) set $F=\mathrm{FirstFactor}(z),$ and then outputs the
set $\{((g),g^{-1}z):g\in F\}$.

\item If $z$ is different from $1$, $b$ and $b^{2}$, and $(g_{1},\ldots
,g_{n})$ is not the empty factorization, \textit{Sibling} computes the
(necessarily nonempty) set $F=\mathrm{FirstFactor}(z)$ and then outputs the
set $\{((g_{1},\ldots ,g_{n},g),g^{-1}z):g\in F\ \text{and}\ (g_{n},g)\ 
\text{join well}\}$.
\end{enumerate}

We make the following elementary but important observations:

\begin{enumerate}
\item For each pair $((h_{1},\ldots ,h_{n}),z)$ formed by a factorization
and any element $z$ in the modular group, we call $h_{1}\cdots h_{n}z$ \emph{%
the product of the pair}. Then \textit{Sibling} preserves products, i.e.,
each pair in $\mathit{Sibling}(((g_{1},\ldots ,g_{n}),z))$ has the same
product as the pair $((g_{1},\ldots ,g_{n}),z)$. Notice that this statement
is true even if $z$ is $b$ or $b^{2}$.

\item If $((g_{1},\ldots ,g_{n}),z),$ where $(g_{1},\ldots ,g_{n})$ is a
well jointed special factorization with $n\geq 0$, then the first component
of each element of \textit{Sibling}$(((g_{1},\ldots ,g_{n}),z))$ is a
special factorization that is also well jointed. Notice that this is true
even if $z$ is $b$ or $b^{2}$.

\item 
\begin{enumerate}
\item \textit{Sibling}$(((g_{1},\ldots ,g_{n}),\omega ))=\{((g_{1},\ldots
,g_{n},\omega b^{2}),b)\}$ for any special factorization $(g_{1},\ldots
,g_{n})$ with $n\geq 0$. Therefore 
\begin{equation*}
\mathit{Sibling}(\mathit{Sibling}(((g_{1},\ldots ,g_{n}),\omega )))=\{\ \}
\end{equation*}%
for any special factorization $(g_{1},\ldots ,g_{n})$ with $n\geq 0$.

\item \textit{Sibling}$(((g_{1},\ldots ,g_{n}),z))=\{\ \},$ if $z=b,b^{2}$
and $(g_{1},\ldots ,g_{n})$ is a special factorization with $n\geq 0$.

\item \textrm{Sibling}$(((g_{1},\ldots ,g_{n}),z))=\{((g_{1},\ldots
,g_{n}),z)\},$ if $z$ is $1$ and $(g_{1},\ldots ,g_{n})$ is a special
factorization with $n\geq 0$.

\item Let $z\notin \{1,\omega ,b,b^{2}\}$ and let $g\in \mathrm{FirstFactor}%
(z)$. Let us see that $l(g^{-1}z)<l(z)$. Let $z$ begin with $\omega $ and $%
g=\omega b^{2}$. Then $z$ will be of the form $\omega b^{\delta }Q,$ where $%
\delta =1,2$ and $Q$ is a reduced word that is $1$ or begins with $\omega $.
We have 
\begin{equation*}
g^{-1}z=(\omega b^{2})^{-1}(\omega b^{\delta }\ Q)=(b\omega )(\omega
b^{\delta })Q=b^{\gamma }Q,
\end{equation*}%
where $\gamma $ is $0$ or $2$, and clearly $l(b^{\gamma }Q)<l(\omega
b^{\delta }Q)$. Let $z$ begin with $b\omega $ and $g=b\omega b$. In this
case $z$ is of the form $b\omega Q,$ where $Q$ is a reduced word that is
either $1$ or begins with $b$ or $b^{2}$. We have $g^{-1}z=(b^{2}\omega
b^{2})(b\omega Q)=b^{2}Q$, and clearly $l(b^{2}Q)<l(b\omega Q)$. Let $z$
begin with $b^{2}\omega $ and let $g=b^{2}\omega $. It is clear that $%
l(g^{-1}z)<l(z)$ in this case. Let $z$ begin with $P^{-1}b\omega ,$ where $P$
is a reduced word that begins with $\omega $, and let $g=P^{-1}b\omega bP$.
Then $z$ is of the form $P^{-1}b\omega Q,$ where $Q$ is a reduced word that
is either $1$ or begins with $b^{\delta }$ with $\delta =1,2$. Then $%
g^{-1}z=(P^{-1}b^{2}\omega b^{2}P)(P^{-1}b\omega Q)=P^{-1}b^{2}Q$. Clearly, $%
l(P^{-1}b^{2}Q)<l(P^{-1}b\omega Q)$.
\end{enumerate}
\end{enumerate}

Now we define another routine, that we will call \textit{SiblingSets} that
takes as input a set $S$ whose elements are ordered pairs of the form $%
((g_{1},\ldots ,g_{n}),z)$, and outputs the set $\cup _{s\in S}\mathrm{%
Sibling}(s)$. Notice that \textit{SiblingSets} applied to the empty set
gives the empty set. Finally, we define a routine, that we call \textit{%
WellJointed} that takes an element $h$ in the modular group as input, then
calculates the result of applying $l(h)+1$ times \textit{SiblingSets} to the
set $\{((\ ),h)\}$, i.e. calculates $T=\mathit{SiblingSets}^{l(h)+1}(\{((\
),h)\})$, and then outputs the set formed by the first components of the
ordered pairs in $T$.

By all the observations above, the algorithm $\mathrm{WellJointed}$ finds
all possible well jointed special factorizations of any element $h.$ By
Remark \ref{superutil} this is all we needed in order to find an $H$%
-complete set of special factorizations of an element $g.$

\section{Acknowledgements}

We thank the Universidad Nacional of Colombia and Universidad Eafit for
their invaluable support.


\begin{thebibliography}{10}
\bibitem[1]{Auroux} D. Auroux, \textit{Mapping class group factorizations
and symplectic 4-manifolds: some open problems}. Problems on Mapping Class
Groups and Related Topics, B. Farb Ed., Amer. Math. Soc., Proc. Symp. Pure
Math., 74, 123--132 (2006).

\bibitem[2]{Cadavid-Velez} Carlos A. Cadavid and Juan D. V\'{e}lez. \emph{On
a minimal factorization conjecture}. ISSN 0166--8641, \textbf{154 }(15),
2786---2794 (2007).

\bibitem[3]{Friedman-Morgan} Robert Friedman and John W. Morgan. \emph{%
Smooth Four-Manifolds and Complex Surfaces}, ISBN 978-3-540-57058-5,
Springer, 1994.

\bibitem[4]{Gompf-2006} Robert E. Gompf, Andr\'{a}s Stipsicz. \emph{%
4-manifolds and Kirby Calculus}. Graduate Studies in Mathematics. Volume 20.
American Mathematical Society, Providence, Rhode Island.

\bibitem[5]{Gompf-2005} Robert E. Gompf. \emph{Locally holomorphic maps
yield symplectic structures}. Communications in analysis and geometry, ISSN
1019--8385, \textbf{13}(3), 511--525 (2005).

\bibitem[6]{Khovanskii-Zdravkovska} A. G. Khovanskii and Smilka Zdravkovska. 
\emph{Branched covers of $S^{2}$ and braid groups}, 1996.

\bibitem[7]{Kodaira} Kodaira, K. (1963). \emph{On compact analytic surfaces:
II}. The Annals of Mathematics, 77(3), 563-626.

\bibitem[8]{Matsumoto} Yukio Matsumoto. \emph{Diffeomorphism types of
elliptic surfaces}. {Topology}, \textbf{Vol. 25}(No. 4), 549-563 (1986).

\bibitem[9]{Moishezon} Boris Moishezon. \emph{Complex surfaces and connected
sums of complex projective planes}, ISBN 0387083553, Springer--Verlag, 1977.

\bibitem[10]{Natanzon} Natanzon, S. M. (1993). \emph{Topology of
2-dimensional coverings and meromorphic functions on real and complex
algebraic curves.} Selecta Mathematica formerly Sovietica, 12(3), 251-291.

\bibitem[11]{Teicher-Liberman} Liberman, E., Teicher, M. (2005). The Hurwitz
equivalence problem is undecidable. arXiv preprint math/0511153.
\end{thebibliography}
\end{document}